%
%
%


\documentclass{amsart}
\usepackage{amscd,amssymb,array,amsmath,epsfig,booktabs}
\usepackage{graphicx,color, xcolor}

\newtheorem{theorem}{Theorem}[section]

\theoremstyle{definition}

\theoremstyle{remark}
\newtheorem*{remark}{Remark}

\numberwithin{equation}{section}

\newcommand{\ackname}{Acknowledgements}



\begin{document}
\title[Galoisian approach to Hill equations]
 {Galoisian approach to complex oscillation theory of some Hill equations}
 
\author{Yik-Man Chiang}
\address{Department of Mathematics, Hong Kong University of Science and
Technology, Clear Water Bay, Kowloon, Hong Kong SAR}
\email{machiang@ust.hk}
\thanks{The first author is partially supported by the Hong Kong Reseach Grant Council (GRF no. 16300814 and 601111. The second author is partially supported by National Natural Science Foundation of China (Grant No. 11871336).}

\author{Guo-fu Yu}
\address{Department of Mathematics,
Shanghai Jiao Tong University, Shanghai, 200240, P. R.\ China}
\email{gfyu@sjtu.edu.cn}

\subjclass{Primary 30D35, 34M05, 33E05; Secondary 33C47, 45B05}

\begin{abstract}
We apply Kovacic's algorithm from differential Galois theory to show that all complex non-oscillatory solutions (finite exponential of convergence of zeros) of certain Hill equations considered by Bank and Laine using Nevanlinna theory must be Liouvillian solutions. That are solutions obtainable by suitable differential field extensions construction. In particular, we have established a full correspondence between solutions of non-oscillatory type  and Liouvillian solutions for a particular Hill equation. Explicit closed-form solutions are obtained via both methods for this Hill equation whose potential is a combination of four exponential functions in the Bank-Laine theory. The differential equation is a periodic form of biconfluent Heun equation. We further show that these Liouvillian solutions exhibit novel single and double orthogonality and a Fredholm integral equation over suitable integration regions in $\mathbf{C}$ that mimic single/double orthogonality for the corresponding Liouvillian solutions of the Lam\'e and Whittaker-Hill equations, discovered by Whittaker and Ince almost a century ago. 
\end{abstract}
\maketitle

\section{Introduction} Differential Galois theory {(\cite{Acosta_Blazquez_2008}, \cite{Acosta_Morales_Weil_2011}, \cite{Morales-Ruiz1999}, \cite{Morales-Ruiz2015},  \cite{Morales_Simo_1994}) has been demonstrated a powerful tool to study, amongst various research areas,  monodromy of linear differential equations. In particular, it can be used to identify \textit{Liouvillian} solutions of the differential equations. These solutions correspond to the associated differential Galois group of the differential equations being solvable are closely related to finding closed-form solutions of the differential equations. A  well-known theorem of Kimura \cite{Kimura1969}, \cite{Morales-Ruiz1999} stated that Gauss hypergeometric equation that admits algebraic solutions or else \textit{Jacobi polynomials} are amongst the Liouvillian solutions of the equation.
 The classification given by Kimura contains the celebrated \textit{Schwarz's list} of when the monodromy group of the hypergeometric equation is \textit{finite}. The study of Liouvillian solutions has been extended to the classical Lam\'e equation
  \begin{equation}
        \label{E:lame-1}
            w^{\prime\prime}(z)+[h-n(n+1)\,k^2\textrm{sn}^2z]\,w(z)=0,
    \end{equation}
where the $\textrm{sn}\, z$ is a \textit{Jacobi elliptic function}. The classical \textit{Lam\'e polynomials} are amongst the Liouvillian solutions. One can find a detailed account of the Lam\'e theory in \cite[Chap. 2]{Morales-Ruiz1999}.  Let $n(r,\,f)$ be the number of zeros of an entire function $f$ in $D(0,\, r):=\{z:\, |z|<r\}$. In this paper, we study the interplay between the Liouvillian solutions to the Hill equation
	\begin{equation}\label{E:BHE-periodic}
			f^{\prime\prime}(z)+\big(-e^{4z}+K_{3}\, e^{3z}+K_{2}\, e^{2z}+K_{1}\, e^{z}+K_0\big)f(z)=0,
		\end{equation}
 where $K_j\in \mathbf{C},\ j=0,\, 1,\, 2,\, 3$ and its
 \textit{non-oscillatory solutions}, i.e., those solutions with $\lambda(f)=\limsup_{r\to\infty}{\log n(r,\, f)}/{\log r} <\infty$ or
 \textit{finite exponent of convergence of zeros}  of Bank-Laine's \textit{complex oscillation theory} that studies solutions of linear second order differential equations with transcendental entire coefficients under the framework of Nevanlinna's value distribution theory \cite{Hayman1975}, \cite{Laine1993}, \cite{Yang1993}.

 Bank and Laine \cite{BL1982}, \cite{BL1983}  showed that when  the Hill equation
\begin{equation}
		\label{E:BL-general}
        f^{\prime\prime}(z)+A(z)f(z)=0,
    \end{equation}
where $A(z)=B(e^z)$,
    \begin{equation}\label{E:periodic-coeff}
        B(\zeta)={K_s}/{\zeta^s}+\cdots+K_0+\cdots+K_\ell \zeta^\ell,
    \end{equation}
and $K_j\, (k=s,\cdots,\, \ell)$ are complex constants, admits a non-trivial entire solution $f$ with  $\lambda(f)<+\infty$ (i.e., zero as a \textit{Borel exceptional} value), then
	\begin{equation}\label{E:BL-representation}
        f(z)=\psi(e^{z/h})\exp \big(dz+P(e^{z/h})\big)
    \end{equation}
where $h=2$ if $\ell$ is odd and $h=1$ otherwise, $\psi(\zeta)$ is a polynomial, $P(\zeta)$ is a Laurent polynomial and $d$ is a constant. Obviously, both the polynomial $\psi$ and $P$ depend on the coefficient $B(\zeta)$. See also \cite{Shimomura2002} for higher order differential equations.

Let $x=e^{z/h}$. Then the equation \eqref{E:BL-general} can be transformed to the equation in $x$:
\begin{equation}
		\label{E:bessel-generalised}
		x^2\Psi^{\prime\prime}(x)+x\Psi^\prime(x)+h^2\Big(\sum_{j=s}^{\ell}K_j x^{jh} \Big)\Psi(x)=0.
	\end{equation}
	
The main purpose of this paper is to show the solutions $\Psi(x)=f(x)$ to \eqref{E:bessel-generalised} must be \textit{Liouvillian solutions} in the sense of \textit{Picard-Vessiot theory} of Kolchin \cite{Kolchin1973} when the solutions $f(z)$ of \eqref{E:BL-general} with zero as the Borel exceptional value. That is, $f$ can be represented in the form \eqref{E:BL-representation}.
In the special case of the equation \eqref{E:BHE-periodic}, which is labelled PBHE, we further show that the converse of the above statement also holds, namely, that each Liouvillian solution $\Psi(x)$ to the differential equation \eqref{E:bessel-generalised} must correspond to a solution $f(z)=\Psi(x)$ of the \eqref{E:BHE-periodic} that
assume the form  \eqref{E:BL-representation}, that is, $\lambda(f)<+\infty$. Indeed, the equivalence problem has been solved for the equation $f^{\prime\prime}(z)+(e^z-K_0)f(z)=0$ and the \textit{Morse equation} in \cite{BLL1986}, \cite{CI2006} and \cite{BL1983} and their differential Galois theory counterparts are dealt with, for examples, in \cite[p. 275]{Acosta_Blazquez_2008}, \cite[p. 351]{Acosta_Morales_Weil_2011} and \cite[p. 355]{Acosta_Morales_Weil_2011} respectively. We next show that if $K_3,\, K_2$ are real and $K_0<0$, then these Liouvillian solutions possess  new single and double orthogonality for the non-oscillatory solutions, as well as a novel Fredholm integral equation that resemble the single and double orthogonality for the Lam\'e polynomials (already recorded in \cite[\S95, \S98]{Heine}) and Fredholm integral equation for the Lam\'e equation discovered by Whittaker \cite{Whitt1915}. We also mention that the orthogonal eigenfunction over $(-\infty,\, \infty)$ satisfy the Sturm-Liouville type boundary condition
	\begin{equation}
		\label{E:boundary}
			\lim_{z\to \pm \infty} f(z)=0.
	\end{equation}
In \cite{Blazquez_Yagasaki_2012} the authors applied differential Galois theory to study Sturm-Liouville problems of Schr\"odinger equations in connection with the boundary condition \eqref{E:boundary}.

Our argument is heavily based on the celebrated algorithm due to Kovacic \cite{Kovacic1986}  specifically to differential equations written in the normalised form
		\begin{equation}
			\label{E:DE-normalised}
			y^{\prime\prime}=ry,
		\end{equation}
where $r\in \mathbf{C}(x)$, where $\mathbf{C}(x)$ denotes the field of rational functions. The algorithm is based on a key theorem of Kovacic which is tailored from Kolchin's theory \cite{Kolchin1973}.
\begin{theorem}[(Kovacic, 1986)]\label{T:classification} There are four cases to distinguish.
		\begin{enumerate}
			\item[(1)] The differential equation \eqref{E:DE-normalised} has a solution of the form $\exp\big(\int \omega\big)$, where $\omega(x)\in \mathbf{C}(x)$;
			\item[(2)] The differential equation \eqref{E:DE-normalised} has a solution of the form $\exp\big(\int \omega\big)$, where $\omega(x)$ is algebraic over $\mathbf{C}(x)$ and  case (1) does not hold;
			\item[(3)] All solutions of the differential equation \eqref{E:DE-normalised} are algebraic over $\mathbf{C}(x)$ and cases (1) and (2) do not hold;
			\item[(4)] The differential equation \eqref{E:DE-normalised} has no Liouvillian solution.
		\end{enumerate}
	\end{theorem}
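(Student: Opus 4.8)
The plan is to derive the four-case dichotomy from the structure theory of the differential Galois group rather than by direct manipulation of the equation. The first step is to observe that, since the normalised equation \eqref{E:DE-normalised} carries no first-order term, the Wronskian of any two solutions is constant; hence the Picard--Vessiot group $G$ of \eqref{E:DE-normalised} over $\mathbf{C}(x)$ is an algebraic subgroup of $\mathrm{SL}_2(\mathbf{C})$. All four cases will then be read off from the classification of such subgroups together with the Galois correspondence.

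The second step is to invoke the classification of algebraic subgroups of $\mathrm{SL}_2(\mathbf{C})$: up to conjugacy $G$ is either (i) reducible, i.e.\ triangularisable with a common invariant line; (ii) irreducible but imprimitive, contained in the normaliser of the diagonal torus (an infinite dihedral-type group) with an invariant pair of lines; (iii) one of the finite primitive groups (the binary tetrahedral, octahedral, or icosahedral groups); or (iv) all of $\mathrm{SL}_2(\mathbf{C})$. I would then translate each possibility into a statement about solutions through the standard dictionary between $G$-invariant subvarieties of the solution space and algebraic relations satisfied by the logarithmic derivative $u=y^\prime/y$, which obeys the Riccati equation $u^\prime+u^2=r$.

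The third step is this translation. If $G$ is reducible, a common eigenvector yields a solution $y$ whose $u$ is fixed by all of $G$, hence lies in the fixed field $\mathbf{C}(x)$; this is Case~(1). If $G$ is imprimitive, the stabiliser of one of the two invariant lines has index two, so the corresponding $u$ is fixed by an index-two subgroup and is therefore algebraic of degree two over $\mathbf{C}(x)$ without being rational (irreducibility forbids a common eigenvector); this is Case~(2). If $G$ is finite, the Picard--Vessiot extension is finite algebraic, so every solution is algebraic over $\mathbf{C}(x)$, while primitivity rules out Cases~(1) and~(2); this is Case~(3). Finally, if $G=\mathrm{SL}_2(\mathbf{C})$, then $G$ has no nontrivial characters and its identity component is not solvable, so by the Lie--Kolchin theorem the equation admits no Liouvillian solution; this is Case~(4). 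Mutual exclusivity is built into the conjugacy-class dichotomy.

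The hard part will be the two inputs I am treating as black boxes: the classification of algebraic subgroups of $\mathrm{SL}_2(\mathbf{C})$, and---within the finite primitive case---the invariant-theoretic computation that pins the degree of the minimal polynomial of $u$ to the values $4,6,12$. The latter comes from the orbit sizes of lines in $\mathbf{P}^1$ under the binary polyhedral groups, and these degree bounds are precisely what make Kovacic's procedure terminate; establishing them rigorously, via the ring of invariants of the binary polyhedral groups acting on symmetric powers of the standard representation, is the crux of the argument.
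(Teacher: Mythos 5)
The paper offers no proof of this statement: it is quoted directly from Kovacic \cite{Kovacic1986}, and \S\ref{S:algorithm} only sketches the background your argument reconstructs, namely the classification of algebraic subgroups of $SL(2,\,\mathbf{C})$ into triangularisable, infinite-dihedral, finite, and full $SL(2,\,\mathbf{C})$ cases, together with the Lie--Kolchin theorem. Your outline is the standard proof and is essentially correct: constancy of the Wronskian places the Galois group $G$ in $SL(2,\,\mathbf{C})$, and the dictionary between $G$-invariant lines in the solution space and rational or algebraic solutions of the Riccati equation $u^{\prime}+u^{2}=r$ converts the four conjugacy classes into the four cases of the theorem. One step is compressed in a way worth repairing: the Lie--Kolchin theorem by itself (a connected solvable linear algebraic group is triangularisable) does not rule out Liouvillian solutions when $G=SL(2,\,\mathbf{C})$; you also need the Picard--Vessiot characterisation, due to Kolchin, that \eqref{E:DE-normalised} admits a Liouvillian solution if and only if the identity component $G^{\circ}$ is solvable, applied together with the non-solvability of $SL(2,\,\mathbf{C})$. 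For full disjointness you should also record, though it is immediate from the tower definition, that cases (1)--(3) do produce Liouvillian solutions, since exponentials of integrals of rational or algebraic functions, and algebraic functions themselves, are Liouvillian. Finally, your identification of the degrees $4,\,6,\,12$ with orbit sizes of lines in $\mathbf{P}^{1}$ under the binary polyhedral groups is the right mechanism, but it belongs to the termination analysis of Kovacic's algorithm (pinning down the minimal polynomial of $u$ in case (3)) rather than to the four-case statement itself, so it is not actually needed for the theorem as stated.
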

We  state our first  result for the general Hill equation \eqref{E:BL-general}.
\begin{theorem}\label{T:oscillation-galois} Let $f$ be an entire function solution to the Hill equation \eqref{E:BL-general} with finite exponential of convergence of zero, i.e., $\lambda(f)<+\infty$. Then the equation \eqref
{E:bessel-generalised}
where $h=2$, if both $\ell\ge s\ge 0$ are odd and $h=1$, if both  $\ell\ge s$ are even (see e.g., \cite{Chiang2000}), admits a Liouvillian solution  $\Psi(x)=f(z)$ to \eqref {E:bessel-generalised},
  where $x=e^{z/h}$, that belongs to the case (1) of Theorem \ref{T:classification}.
\end{theorem}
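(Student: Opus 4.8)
The plan is to let the Bank--Laine representation \eqref{E:BL-representation} carry the entire analytic burden and then simply read off Kovacic's case (1) from it after the change of variable $x=e^{z/h}$. First I would invoke the hypothesis: since $f$ is a non-trivial entire solution of \eqref{E:BL-general} with $\lambda(f)<+\infty$, zero is a Borel exceptional value, so the Bank--Laine theorem applies and furnishes
\[
  f(z)=\psi\!\left(e^{z/h}\right)\exp\!\big(dz+P(e^{z/h})\big),
\]
with $\psi$ a polynomial, $P$ a Laurent polynomial, $d\in\mathbf{C}$, and $h\in\{1,2\}$ chosen by the stated parity conditions on $s,\ell$. This is the only place the Nevanlinna-theoretic input enters, and it is precisely what guarantees that $\psi$ and $P$ are \emph{finite} objects rather than arbitrary transcendental functions.

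Next I would substitute $x=e^{z/h}$, equivalently $z=h\log x$, so that $e^{z}=x^{h}$ and $\exp(dz)=x^{dh}$. The representation then becomes
\[
  \Psi(x)=f(z)=\psi(x)\,x^{dh}\exp\!\big(P(x)\big).
\]
Because this same change of variable transforms \eqref{E:BL-general} into \eqref{E:bessel-generalised} (one computes $f'(z)=\tfrac{x}{h}\Psi'(x)$ and $f''(z)=\tfrac{x^{2}}{h^{2}}\Psi''(x)+\tfrac{x}{h^{2}}\Psi'(x)$, then multiplies through by $h^{2}$), the function $\Psi$ is a genuine solution of \eqref{E:bessel-generalised}, as claimed.

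The heart of the matter is then to verify that $\Psi$ has a rational logarithmic derivative. Taking the logarithmic derivative of the displayed product gives
\[
  \frac{\Psi'(x)}{\Psi(x)}=\frac{\psi'(x)}{\psi(x)}+\frac{dh}{x}+P'(x)=:\omega(x).
\]
Here $\psi'/\psi$ is rational because $\psi$ is a polynomial, $P'$ is again a Laurent polynomial because $P$ is one, and $dh/x\in\mathbf{C}(x)$ for any complex $d$. Hence $\omega\in\mathbf{C}(x)$ and $\Psi=\exp\!\big(\int\omega\big)$ up to a constant multiple, which is exactly the shape demanded by case (1) of Theorem \ref{T:classification}.

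Finally I would reconcile this with the normalised form $y''=ry$ to which Theorem \ref{T:classification} literally refers. Dividing \eqref{E:bessel-generalised} by $x^{2}$ and setting $\Psi=x^{-1/2}y$ removes the first-order term and produces $y''=ry$ with $r\in\mathbf{C}(x)$; the extra factor $x^{-1/2}$ merely shifts $\omega$ by the rational function $1/(2x)$, so $y$ still has a rational logarithmic derivative and remains in case (1). I do not expect a genuine obstacle: once Bank--Laine is invoked, the argument is bookkeeping that tracks the rationality of the logarithmic derivative through the change of variable and the normalisation. The only point demanding care is that $\psi$ being a polynomial and $P$ being a \emph{Laurent} polynomial — rather than general entire functions — is exactly what keeps $\omega$ inside $\mathbf{C}(x)$, and this is precisely the information encoded in the finiteness condition $\lambda(f)<+\infty$.
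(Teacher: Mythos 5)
Your proposal is correct and follows essentially the same route as the paper's own proof: invoke the Bank--Laine representation, read off the rational logarithmic derivative $\omega=P'+\psi'/\psi+hd/x\in\mathbf{C}(x)$ after setting $x=e^{z/h}$, and observe that the normalising substitution $\Psi=x^{-1/2}y$ only shifts $\omega$ by $1/(2x)$, leaving the solution in case (1) of Kovacic's classification. The only difference is cosmetic, namely that you verify the change-of-variable computation transforming \eqref{E:BL-general} into \eqref{E:bessel-generalised} explicitly, which the paper takes as known.
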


Acosta-Hum\'anez and Bl\'azquez-Sanz applied differential Galois theory in \cite{Acosta_Blazquez_2008} to conclude that both the \textit{Mathieu} equation
	\[
		f^{\prime\prime}(z)+(A+B\cos 2z)f(z)=0
	\]
and the \textit{extended Mathieu equation}
	\[
		f^{\prime\prime}(z)+(A+B\cos 2z+C\sin 2z)f(z)=0
	\]
have no Liouvillian solutions.

In addition to the Lam\'e equation (see \S\ref{S:integral}) and the Mathieu equation, Whittaker appears to be the first one to derive a Fredholm type integral equation of the second type for the Whittaker-Hill equation \cite{Whitt1914} (see also \cite{Magnus_Winkler1979}), namely that periodic solutions of the equation
	\begin{equation}
		\label{E:WH-eqn}
		f^{\prime\prime}(z)+\big[A-(n+1)\ell \cos 2z+\frac18\ell^2\cos^2 4z\big] f(z)=0,
	\end{equation}
are eigen-solutions to the
	\begin{equation}
		\label{E:WH-int}
		f(z)=\lambda\int_0^{2\pi} \cos^n(z-t)\, e^{\frac12\ell(\sin^2z+\sin^2 t)}\, f(t)\, dt\, ,
	\end{equation}
with symmetric kernel. We shall extend Ince's method from \cite{Ince1922} to derive an analogous Fredholm type integral equation of the second type for the Liouvillian solutions for the PBHE \eqref{E:BHE-periodic} with symmetric kernel written in terms of a confluent hypergeometric function. These integral equations possess rich monodromy information of the eigen-solutions concerned that are awaiting for serious study, despite their long history.

This paper is organised as follows. We shall briefly review the Picard-Vessiot theory, the meaning of Liouvillian solutions and Kovacic's algorithm in section \S\ref{S:algorithm} before we can state the main results
 about the equivalence between non-oscillatory solutions of \eqref{E:BHE-periodic} and Liovuiliian solutions of
 \eqref{E:bessel-generalised}  in \S\ref{S:equivalence}. The proofs of Theorems \ref{T:determinant_soln} and \ref{T:oscillation-galois} are given in  \S\ref{S:equivalence} and \S\ref{S:oscillation-galois} respectively. We introduce main results about orthogonality in \S\ref{S:orthogonality} and their proofs are  given in \S\ref{S:simple-orth}  and \S\ref{S:double-orth}.  Finally we discuss a new Fredholm type integral equation for the Liouvillian solutions from Theorem \ref{T:determinant_soln} in \S\ref{S:integral} and its derivation is given in \S\ref{S:integral}. We append part of the Kovacic algorithm that we need to use in the Appendix of this paper.

 \section{The Kovacic algorithm} \label{S:algorithm}
\subsection{Differential Galois theory}

We give a brief description on basic terminology and fundamentals of the differential Galois theory that lead to Kovacic's Theorem \ref{T:classification} in this section.

A \textit{differential field} $\mathbf{F}(\supseteq \mathbf{C}(x))$ is \textit{Liouvillian} if there is a tower of differential extension fields $\mathbf{C}(x)=\mathbf{F}_0\subset \mathbf{F}_2\subset \cdots\subset \mathbf{F}_n=\mathbf{F}$ such that for each $i=1,\, \cdots, n$
	\begin{enumerate}
		\item $\mathbf{F}_k=\mathbf{F}_{k-1}[\alpha]$ were $\alpha^\prime/\alpha\in \mathbf{F}_{k-1}$;
		\item or $\mathbf{F}_k=\mathbf{F}_{k-1}[\alpha]$ where $\alpha^\prime\in \mathbf{F}_{k-1}$;
		\item or $\mathbf{F}_k$ is finite algebraic over $\mathbf{F}_{k-1}$.
	\end{enumerate}

A function is said to be \textit{Liouvillian} if it is contained in some Liouvillian field defined above.

Suppose we are given a linear differential equation  $aY^{\prime\prime}+bY^\prime+cY=0$. Then we can always rewrite it into the so-called normal form: $y^{\prime\prime}=ry$ where $r$ belongs to the same differential field as that of $a,\, b,\, c$, that is in $\mathbf{C}(x)$. Let $y_1,\, y_2$ be a fundamental set of solutions to the DE $y^{\prime\prime}=ry$. Let
		\[
			\mathbf{G}=\mathbf{C}(x)<y_1,\, y_2>=\mathbf{C}(x)(y_1,\, y_2,\, y_1^\prime,\, y_2^\prime)
		\]
 be the extension field of $\mathbf{C}(x)$ generated by $y_1,\, y_2$. Then we define the \textit{(differential) Galois group} of $\mathbf{G}$ over $\mathbf{C}(x)$ to be the set of all \textit{differential automorphisms} $\sigma$ of $\mathbf{G}$  that leaves $\mathbf{C}(x)$ invariant. An automorphism $\sigma$ of $\mathbf{G}$ is differential  if $\sigma(y^\prime)=(\sigma y)^\prime$ for all $\sigma \in \mathbf{G}$.

Without going into the details, the fundamental theorem of differential Galois theory of a linear differential equation is about the isomorphisms between the intermediate fields of Picard-Vessiot extension and the closed subgroups of its differential Galois group. It is fundamental and demonstrated by Kovacic \cite{Kovacic1986} that the representations of these subgroups are \textit{linear algebraic groups}, namely that they are subgroups of $GL(2,\, \mathbf{C})$, in that each matrix element satisfies a polynomial equation. A fundamental theorem of Lie-Kolchin \cite[p. 8]{Morales-Ruiz1999},
is that a connected linear algebraic group is \textit{solvable} if and only if it is conjugate to a triangular group. See also Laine \cite[Chap. 14]{Laine1993}. Then a classification of algebraic subgroups of $SL(2,\, \mathbf{C})$ is that
	\begin{enumerate}
		\item $\mathbf{G}$ is triangulisable.
		\item $\mathbf{G}$ is conjugate to a subgroup of the infinite dihedral group
		and case (1) does not hold.
		\item $\mathbf{G}$ is finite and cases (1) and (2) do not hold.
		\item $\mathbf{G}=SL(2,\, \mathbf{C})$.
	\end{enumerate}

\subsection{Kovacic's algorithm}	
Kovacic tailored the above theorem specifically to differential equations written in the normalised form $y^{\prime\prime} =ry$  already given in Theorem \ref{T:classification}.

After transforming a given linear differential equation to the normalised form \eqref{E:DE-normalised}, Kovacic's algorithm gives a \textit{necessary condition} that tests the plausibility of the equation \eqref{E:DE-normalised} having a Liouvillian solution. We first write $r=s/t$ with $s,\, t\in\mathbf{C}[x]$, relatively prime. Then it is clear that the finite poles of $r$ locate at the zeros of $t$. We understand the order of $r$ at infinity in the usual sense to be the order of pole of $r(1/x)$ at $x=0$. That is, the integer $\deg t-\deg s$.
 		
\begin{theorem}[Kovacic \cite{Kovacic1986}]\label{T:rational} The following conditions are necessary for the respective cases mentioned in Theorem \ref{T:classification} to hold.
	\begin{enumerate}
		\item Every pole of $r$ must have even order or else have order 1. The order of $r$ at $\infty$ must be even or else be greater than 2.
		\item $r$ must have at least one pole that either has odd order greater than 2 or else has order 2.
		\item The order of a pole of $r$ cannot exceed 2 and the order of $r$ at $\infty$ must be at least 2. If the partial fraction expansion of $r$ is
			\begin{equation}
				r=\sum_i \frac{\alpha_i}{(x-c_i)^2}+\sum_j\frac{\beta_j}{x-d_j},
			\end{equation}
		then $\sqrt{1+4\alpha_i}\in\mathbf{Q}$, for each $i$, $\sum_j\beta_j=0$, and if
			\begin{equation}
				\gamma=\sum_i\alpha_i+\sum_j\beta_jd_j,
			\end{equation}
		then $\sqrt{1+4\gamma}\in\mathbf{Q}$.
	\end{enumerate}
\end{theorem}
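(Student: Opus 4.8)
The plan is to reduce each of the three cases to a statement about the associated \emph{Riccati equation}. Writing a solution of the form appearing in cases~(1)--(2) of Theorem~\ref{T:classification} as $y=\exp\bigl(\int\omega\bigr)$ and substituting into \eqref{E:DE-normalised} turns the second-order linear equation into the first-order Riccati equation
\begin{equation}
  \omega'+\omega^2=r .
\end{equation}
With this substitution, case~(1) holds precisely when the Riccati equation admits a solution $\omega\in\mathbf{C}(x)$; case~(2) precisely when it admits a solution algebraic of degree exactly $2$ over $\mathbf{C}(x)$ while case~(1) fails; and case~(3) precisely when every Riccati solution is algebraic (of degree $4,6$ or $12$) while cases~(1) and~(2) fail. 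Each necessary condition in the statement is then read off by comparing the local Laurent, respectively Puiseux, expansions of $\omega$ and of $r$ at every finite pole of $r$ and at $\infty$.

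For case~(1), I would argue entirely locally. At a finite pole $c$ of $r$, a rational $\omega$ has a pole of some order $m$; if $m\le 0$ then $\omega'+\omega^2$ is regular and $r$ would have no pole at $c$, so $m\ge 1$. Balancing the two sides of the Riccati equation shows that $\omega^2$ dominates when $m\ge 2$, forcing $r$ to have the even pole order $2m$, whereas $m=1$ forces the order of $r$ at $c$ to be at most $2$, the order dropping to $1$ exactly when the residue of $\omega$ equals $1$ so that the double-pole terms cancel. This gives ``every pole has even order or order $1$''. Applying the same balancing to the degree $e$ of $\omega$ at infinity gives even order $\le 0$ when $e\ge 0$, order $2$ when $e=-1$ (generically, or $>2$ upon cancellation), and order $1-e\ge 3$ when $e\le -2$; together these are exactly ``order at $\infty$ even or else $>2$''.

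For case~(2), the identical balancing must be run for an $\omega$ that is algebraic of degree $2$, so its local expansions are genuine half-integer Puiseux series. A branch of $\omega$ can only be forced at a finite pole $c$ where the leading balance $\omega\sim a(x-c)^{-\nu/2}$ carries a half-integer exponent, i.e.\ at a pole of \emph{odd} order $\nu>2$; the remaining possibility is a pole of order $2$ at which the indicial data of \eqref{E:DE-normalised} make the two local exponents irrational, producing a dihedral local monodromy. The crux is the global step: one must show that if $r$ had \emph{no} pole of odd order $>2$ and \emph{no} pole of order $2$, then every local expansion of $\omega$ would be single-valued, the two algebraic conjugates of $\omega$ would have to coincide, and $\omega$ would be rational, collapsing case~(2) into case~(1) against hypothesis; this contradiction yields at least one pole of the required type.

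For case~(3), finiteness of the differential Galois group forces every singular point, $\infty$ included, to have finite local monodromy and hence to be at worst a regular singular point; this simultaneously bounds all pole orders by $2$ and makes the order of $r$ at $\infty$ at least $2$. At a double pole $c_i$ the indicial equation $\rho(\rho-1)=\alpha_i$ has exponents $\tfrac12\bigl(1\pm\sqrt{1+4\alpha_i}\bigr)$, and finite monodromy forces them to be rational, whence $\sqrt{1+4\alpha_i}\in\mathbf{Q}$. Expanding the partial-fraction form of $r$ about $\infty$ yields $r=\bigl(\sum_j\beta_j\bigr)x^{-1}+\gamma\,x^{-2}+O(x^{-3})$ with $\gamma=\sum_i\alpha_i+\sum_j\beta_j d_j$; the requirement that the order at $\infty$ be at least $2$ kills the $x^{-1}$ term, giving $\sum_j\beta_j=0$, and the indicial equation at $\infty$ then forces $\sqrt{1+4\gamma}\in\mathbf{Q}$ by the same argument. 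The main obstacle throughout is case~(2): converting the abstract statement that the Galois group is conjugate into the infinite dihedral group into the clean arithmetic of half-integer Puiseux exponents, and in particular carrying out the global collapse argument that excludes the all-even-order and order-$1$ pole configuration.
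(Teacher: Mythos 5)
The paper offers no proof of Theorem \ref{T:rational}: it is imported verbatim from Kovacic \cite{Kovacic1986}, so your attempt can only be compared with the standard argument, not with anything in the text. Measured against that, your case (1) is correct and complete: the Laurent-order balancing in the Riccati equation $\omega'+\omega^2=r$ at finite poles ($m\ge 2$ forces even order $2m$; $m=1$ forces order $2$, dropping to $\le 1$ exactly when the residue is $1$) and the three regimes $e\ge 0$, $e=-1$, $e\le -2$ at infinity are exactly the standard proof. Your case (3) is also essentially sound: a finite Galois group makes every solution algebraic, hence every singularity (including $\infty$) regular singular, which bounds finite pole orders by $2$ and forces order $\ge 2$ at $\infty$; expanding the partial fractions at infinity gives $r=\bigl(\sum_j\beta_j\bigr)x^{-1}+\gamma x^{-2}+O(x^{-3})$, so $\sum_j\beta_j=0$, and rationality of the indicial exponents $\tfrac12\bigl(1\pm\sqrt{1+4\alpha_i}\bigr)$ and $\tfrac12\bigl(-1\pm\sqrt{1+4\gamma}\bigr)$ yields the two square-root conditions.

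The genuine gap is in case (2), precisely where you flagged it, and it is threefold. First, your local classification of where a degree-$2$ algebraic $\omega$ can branch is asserted rather than proved, and one assertion is wrong: at an order-$2$ pole, \emph{irrational} exponent difference does not produce branching. If $\sqrt{1+4b}\notin\tfrac12\mathbf{Z}$, the local monodromy acts on the projective family of Riccati solutions by $(c_1:c_2)\mapsto\bigl(e^{2\pi i\rho_+}c_1:e^{2\pi i\rho_-}c_2\bigr)$ with order $>2$, so a branch of a degree-$2$ function must be one of the two fixed points $y_\pm'/y_\pm$ and is then single-valued; branching at an order-$2$ pole can occur only when $\sqrt{1+4b}\in\tfrac12+\mathbf{Z}$, when the swap is possible. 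Second, you must actually prove that order-$1$ poles and even-order poles $\ge 4$ carry no branch points; one clean route is to split the putative Puiseux series $\omega=u+w$ into its integer-exponent and half-integer-exponent parts, observe that the Riccati equation forces $w'+2uw=0$, hence $w=C\exp\bigl(-2\int u\bigr)$, which at an even-order pole $2v\ge 4$ has an essential singularity and at order $\le 2$ has integer exponents unless the half-odd-integer condition above holds — so $C=0$ in the excluded configurations. Third, and most seriously, your global collapse argument never treats the point at infinity: showing all \emph{finite} local expansions single-valued does not make $\omega$ rational, since $\omega$ could a priori branch at $\infty$ alone. This is rescued by parity: a quadratic extension of $\mathbf{C}(x)$ is $\mathbf{C}(x)\bigl(\sqrt{f}\bigr)$ with $f$ squarefree, branched at the roots of $f$ and at $\infty$ exactly when $\deg f$ is odd, so a double cover of $\mathbf{P}^1$ cannot have a single branch point — but this step is absent. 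With these three insertions your outline closes; as written, case (2) does not.
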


One observes from Theorem \ref{T:classification} that the most general solution has the form $y=P\exp\big(\int \omega\big)=\exp\big(\int P^\prime/P+\omega\big)$, where $P\in \mathbf{C}[x]$ and $\omega\in\mathbf{C}(x)$ so that the logarithmic derivative of $y$ satisfies the Riccati equation
	\begin{equation}
		\frac{P^{\prime\prime}}{P}+2\omega\, \frac{P^\prime}{P}+(\omega^\prime+\omega^2)=r
	\end{equation}
and the task would then be to determine the \textit{main part} of $\omega$ using the above Theorem \ref{T:rational} by constructing a rational function denoted by $[\sqrt{r}]_c$ for various poles $c$ of $r$. The remaining task would be to determine the polynomial component $P$ in the differential equation
	\begin{equation}
		\label{E:Riccati-linearlized}
		P^{\prime\prime}+2\omega P^\prime +(\omega^\prime+\omega^2-r)P=0,
	\end{equation}
if any. An affirmative answer would imply a Liouvillian solution. Since the full Kovacic's algorithm is lengthy, we refer readers to the appendix of this paper.

\section{Complex non-oscillatory and Liouvillian solutions}\label{S:equivalence}
It is easy to check that the transformation $\Psi(x)=f(z),\ x=e^{z}$ (i.e., $h=1$ in \eqref{E:BL-representation}), transforms the equation \eqref{E:BHE-periodic} to the equation
	\begin{equation}
		\label{E:bessel-generalised-BHE}
		x^2\Psi^{\prime\prime}(x)+x\Psi^\prime(x)+\big(-x^4+K_3x^3+K_2x^2+K_1x+K_0\big)\Psi(x)=0,
	\end{equation}
which we call a \textit{generalised Bessel's equation}. It is easy to see from the format of the equation that \eqref{E:bessel-generalised-BHE} has a regular singularity at the origin and an irregular singularity at $\infty$ which is of higher rank than that of the Bessel equation. The well-known formula
	\begin{equation}
		\label{E:transform}
		\Psi=y\cdot \exp\Big(-\frac12\int \frac1x\Big)=x^{-1/2}\cdot y
	\end{equation}
transforms the equation \eqref{E:bessel-generalised-BHE} to a standard normal form
	\begin{equation}
		\label{E:BHE-normal-BL}
			y^{\prime\prime}=\Big(x^2-K_3x-K_2-\frac{K_1}{x}-\frac{1/4+K_0}{x^2} \Big)y
	\end{equation}
in order to apply Kovacic's algorithm to identify Liouvillian solutions to the normal form of generalised Bessel equation \eqref{E:BHE-normal-BL}, if any. This would lead to a criterion of entire solutions of the Hill equation \eqref{E:BHE-periodic} with finite exponent of convergence of zeros $\lambda(f)<+\infty$.
The origin is a regular singularity of equation \eqref{E:BHE-normal-BL}, but the $\infty$ is an irregular singularity. We mentioned that the authors of \cite{FRRJT_2010} applied differential Galois theory to study the Stoke phenomenon of \textit{prolate spheroidal wave equation}. It would be interesting to see if their approach could also be applied to this equation we consider in this paper.

\begin{theorem}
    \label{T:determinant_soln}
        Let $K_3,\, K_2,\, K_1$ and $K_0$ be arbitrary constants in $\mathbf{C}$. Then the Hill equation \eqref{E:BHE-periodic}, when written in the operator form,
			\begin{equation}
				\label{E:BHE-operator}
					{-H}f(z):=-\Big(e^{-z}\frac{d^2}{dz^2}-e^{3z}+K_3e^{2z}+K_2e^z+K_0e^{-z}\Big)f(z)
=K_1f(z),
			\end{equation}
admits an entire solution with $\lambda(f)<+\infty$ if and only if the normalised generalised Bessel equation \eqref{E:BHE-normal-BL} admits a Liouvillian solution in the case \emph{(1)} of Kovacic's Theorem \ref{T:classification} above. Moreover, when this happens, there exists an non-negative integer $n$ such that the following equation
		\begin{equation}
        \label{E:condition_I}
           \frac{K_3^2}{4}+K_2+ 2\epsilon_0\epsilon_\infty \sqrt{-K_0}=-2\epsilon_\infty (n+1),
    \end{equation}
holds amongst the $K_3,\, K_2$ and $K_0$, where $\epsilon_\infty^2=\epsilon_0^2=1$. Furthermore,  there are either
        \begin{enumerate}
            \item $n+1$, possibly repeated, choices of $K_1$;\\
  \noindent          or
            \item precisely $n+1$ distinct real values of $K_1$, labelled by $(K_1)_\nu\ (\nu=0,\, \cdots, n)$, if, in addition, that $K_3,\, K_2,\, K_0$ are assumed to be real, $K_0<0$, $\epsilon_\infty=-1$ and such that
                \begin{equation}
                    \label{E:distinct}
                      1+2\epsilon_0\, \sqrt{-K_0}>0
                \end{equation}
                holds.
        \end{enumerate}
The $(K_1)_\nu\ (\nu=0,\, \cdots, n)$ consist of the roots of the $(n+1)\times (n+1)$ determinant $D_{n+1}(K_1)=0$ where $D_{n+1}(K_1)$ is  tridiagonal and it is equals to
      \begin{equation}
        \label{E:Det}
    \left|
    \begin{array}{lllll}
        k_1 & 1 & & &  \\
        (1+2\epsilon_0\sqrt{-K_0})k_2 & k_1-\epsilon_{\infty}K_3 & 1& &  \\
        &
            \begin{split}
                &2(2+2\epsilon_0\sqrt{-K_0})\\
                &\ \times (k_2+2\epsilon_\infty)
            \end{split}
        &k_1-\epsilon_\infty2K_3 & 1 & \\
        &\ddots & \ddots & \ddots & \\
        &\ddots & \ddots & \ddots &\\
        & &
            \begin{split}
                    &(n-1) \\
					&\ \times(n-1+2\epsilon_0\sqrt {-K_0}) \\
                    &\ \times (k_2+2\epsilon_{\infty}(n-2))
            \end{split}
        &  k_1-\epsilon_\infty(n-1)K_3 &  1 \\
        & & &
            \begin{split}
                &n(n+2\epsilon_0\sqrt{-K_0})\\
                &\ \times (k_2+2\epsilon_{\infty}(n-1))
            \end{split}
        & k_1-\epsilon_{\infty}nK_3
    \end{array}
    \right|,
 \end{equation}
 and $k_1$, $k_2$ are given by
\begin{align}\label{E:k_s}
&k_1=K_1-\frac{\epsilon_{\infty}}{2}(1+2\epsilon_0\sqrt{-K_0})K_3,\\
& k_2=2\epsilon_{\infty}(1+\epsilon_0\sqrt{-K_0})+K_2+\frac{1}{4}K_3^2.
\end{align}

Moreover, we can write the solution $f$ in the form
\begin{equation}
        \label{E:BHE-periodic-soln}
            \mathrm{BH}_{n,\,\nu}(z)= Y_{n,\,\nu}(e^{-z})\, \exp\bigg[\frac{\epsilon_\infty}{2}\big(e^{2z}-K_3\, e^z\big)+(n+\epsilon_0\sqrt{-K_0})\,z\bigg],
    \end{equation}
where  $Y_{n,\,\nu}(x)$ is a polynomial of degree $n$, and $\nu=0,\, 1,\, \cdots, n$ respectively for the different roots of $D_{n+1}(K_1)=0$. Clearly, each $y(x)= \mathrm{BH}_{n,\,\nu}(z)\ (\nu=0,\cdots, n$) is a Liovuillian solution to the differential equation \eqref{E:BHE-normal-BL}.
\end{theorem}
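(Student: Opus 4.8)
The plan is to run Kovacic's algorithm on the normalised equation \eqref{E:BHE-normal-BL} inside case \emph{(1)} of Theorem \ref{T:classification}, reading off the quantisation condition \eqref{E:condition_I} and the determinant \eqref{E:Det} from the requirement that the linearised Riccati equation \eqref{E:Riccati-linearlized} admit a polynomial solution. First I would record the singularity data of $r = x^2 - K_3x - K_2 - K_1/x - (1/4+K_0)/x^2$: a double pole at the origin and order $-2$ at infinity (so that $r\sim x^2$ there), which is exactly the configuration permitted by the necessary conditions of Theorem \ref{T:rational} for case \emph{(1)}. At the origin the coefficient of $x^{-2}$ is $-(1/4+K_0)$, so $1+4\cdot(-(1/4+K_0)) = -4K_0$ and the two local exponents are $\tfrac12 \pm \sqrt{-K_0}$; at infinity the polynomial part of $\sqrt{r}$ is $\epsilon_\infty(x - K_3/2)$. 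I encode the two branch choices through the signs $\epsilon_0,\epsilon_\infty\in\{\pm1\}$.

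Next I would assemble the case-\emph{(1)} candidate $\omega = \epsilon_\infty(x - K_3/2) + (\tfrac12 + \epsilon_0\sqrt{-K_0})/x$ and compute $\omega' + \omega^2 - r$. A direct calculation shows that the $x^2$, $x$ and $x^{-2}$ terms cancel identically (the $x^2$ term via $\epsilon_\infty^2=1$, the $x$ term via the choice $-\epsilon_\infty K_3/2$, and the $x^{-2}$ term via $(\epsilon_0\sqrt{-K_0})^2 = -K_0$), leaving $\omega'+\omega^2-r = k_2 + k_1/x$, where $k_1,k_2$ are precisely the quantities in \eqref{E:k_s}. Substituting $\omega$ into \eqref{E:Riccati-linearlized} and seeking $P=\sum_{j=0}^n p_j x^j$, the highest-degree balance forces $2\epsilon_\infty n + k_2 = 0$, which after using \eqref{E:k_s} is exactly \eqref{E:condition_I}; the remaining coefficients then satisfy the three-term recurrence
\begin{equation*}
(m{+}1)(m{+}1{+}2\epsilon_0\sqrt{-K_0})\,p_{m+1} + (k_1-\epsilon_\infty m K_3)\,p_m + 2\epsilon_\infty(m-1-n)\,p_{m-1}=0,
\end{equation*}
for $m=0,\dots,n$, with the conventions $p_{-1}=p_{n+1}=0$.

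I would then read this recurrence as a homogeneous linear system $M\mathbf{p}=0$ in $(p_0,\dots,p_n)$ with $M$ tridiagonal, so that a nontrivial $P$ of degree $n$ exists if and only if $\det M=0$. Using the identity $k_2 = -2\epsilon_\infty n$ that comes out of \eqref{E:condition_I}, the adjacent off-diagonal products take the shape $j(j+2\epsilon_0\sqrt{-K_0})(k_2+2\epsilon_\infty(j-1))$ for $j=1,\dots,n$; a diagonal similarity normalising the superdiagonal entries to $1$ leaves $\det M$ unchanged and carries $M$ into exactly the matrix of \eqref{E:Det}, whose diagonal $k_1-\epsilon_\infty m K_3$ depends on $K_1$ only through $k_1$. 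Hence $D_{n+1}(K_1)$ is a polynomial of degree $n+1$ in $K_1$, and its roots give the $n+1$ admissible values counted with multiplicity, which is part \emph{(1)}.

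For part \emph{(2)} I would show that under the reality hypotheses ($K_3,K_2,K_0$ real, $K_0<0$, $\epsilon_\infty=-1$) the matrix $M$ is similar to a real symmetric Jacobi matrix, hence has $n+1$ distinct real eigenvalues. The key point is that the off-diagonal products equal $2j(j+2\epsilon_0\sqrt{-K_0})(n+1-j)$, in which $j>0$ and $n+1-j>0$ for $1\le j\le n$, while condition \eqref{E:distinct} guarantees $j+2\epsilon_0\sqrt{-K_0}\ge 1+2\epsilon_0\sqrt{-K_0}>0$; all products are therefore strictly positive, which is the classical criterion for such a tridiagonal matrix to have real, simple spectrum. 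Finally, reconstructing $y=P\exp(\int\omega)$ and undoing the transformations \eqref{E:transform} and $x=e^z$ (reversing $P$ so as to display it as $Y_{n,\nu}(e^{-z})$) yields the closed form \eqref{E:BHE-periodic-soln}, which is manifestly entire with only the finitely many zeros per strip contributed by $Y_{n,\nu}$, so $\lambda(f)\le 1<\infty$; this supplies the converse half of the equivalence, the forward half being Theorem \ref{T:oscillation-galois}. I expect the main obstacle to be the bookkeeping in the third paragraph: matching the recurrence determinant to the precise entries of \eqref{E:Det} requires carefully tracking the diagonal rescaling together with the substitution $k_2=-2\epsilon_\infty n$ that converts the $(m-n)$ factors into the $(k_2+2\epsilon_\infty(j-1))$ factors, all while keeping the two sign choices $\epsilon_0,\epsilon_\infty$ consistent.
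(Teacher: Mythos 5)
Your proposal is correct and follows essentially the same route as the paper: Kovacic's case (1) applied to \eqref{E:BHE-normal-BL} with the same local data ($\alpha_0^{\epsilon_0}=\tfrac12+\epsilon_0\sqrt{-K_0}$, $[\sqrt{r}]_\infty=x-K_3/2$), the same candidate $\omega$, the identity $\omega'+\omega^2-r=k_2+k_1/x$ (which agrees with the paper's \eqref{E:approx-root} since $k_2=2\epsilon_\infty(1+\epsilon_0\sqrt{-K_0})+K_2+K_3^2/4$), the leading-coefficient balance $k_2=-2\epsilon_\infty n$ equivalent to \eqref{E:condition_I}, and the same three-term recurrence whose tridiagonal system reproduces \eqref{E:Det}; your diagonal similarity normalising the superdiagonal to $1$ is the matrix-language version of the paper's Pochhammer-scaled substitution $P_n=\sum_m A_m x^m/[(1+2\epsilon_0\sqrt{-K_0})_m\, m!]$. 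The one genuine difference is part (2): where the paper identifies $1+2\epsilon_0\sqrt{-K_0}$ with the parameter $1+\alpha$ of the canonical biconfluent Heun equation \eqref{E:BHE} and cites Rovder's theorem to get $n+1$ distinct real values of $K_1$, you argue directly that under \eqref{E:distinct} the off-diagonal products $2j(j+2\epsilon_0\sqrt{-K_0})(n+1-j)$ are strictly positive for $1\le j\le n$, so the matrix is diagonally similar to a real symmetric Jacobi matrix with nonzero off-diagonals and hence has simple real spectrum in $k_1$ (and so in $K_1$, the shift being real). This is self-contained and correct, and arguably preferable to the external citation. Two minor points of divergence worth noting: the paper first derives the shape \eqref{E:BHE-periodic-soln} independently from the Bank--Laine representation \eqref{E:BL-representation} (fixing $d_2=\epsilon_\infty/2$, $d_1=-\epsilon_\infty K_3/2$, $d=\epsilon_0\sqrt{-K_0}$) before running Kovacic, whereas you obtain it only as the algorithm's output, which is fine for the logic; and you silently assume a genuine double pole at the origin, skipping the degenerate sub-cases $K_0=-\tfrac14$ (with $K_1=0$ or $K_1\neq0$) in which the pole order drops and Kovacic's Step 1 prescribes different data --- the paper lists these separately (equations \eqref{E:case1-1}, \eqref{E:case1-2}) even though it too omits their details, so for completeness you should at least flag them.
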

 \begin{remark}  We remark that if $\epsilon_0=1$,  so that $1+2\epsilon_0\sqrt{-K_0}>1>0$ holds trivially. If, however,  $\epsilon_0=-1$, then $1-2\sqrt{-K_0}>0$, so that $0<\sqrt{-K_0}<1/2$.
 \end{remark}

\section{Proof of Theorem \ref{T:oscillation-galois} }\label{S:oscillation-galois}
\begin{proof} We suppose that $f$ is a solution of \eqref{E:BHE-periodic} with $\lambda(f)<+\infty$. Then Bank and Laine's result \cite{BL1983} asserts that $f$ is given by \eqref{E:BL-representation}, where $f(z)=\Psi(x)$, $x=e^{z/h}$. Hence
	\begin{equation}
		\label{E:connection}
			\Psi(x) = {x^{hd}}\psi(x)\exp\big(P(x)\big)=\exp\Big(\int \omega\Big)
	\end{equation}
where
	\begin{equation}
		\omega= P^\prime+\frac{\psi^\prime}{\psi} +\frac{hd}{x},
	\end{equation}
which belongs to $\mathbf{C}(x)$. According to the transformation  \eqref{E:transform}, we have
	\begin{equation}
		y=\exp\Big(\int \tilde{\omega}\Big)
	\end{equation}
where
	\begin{equation}
		\tilde{\omega}= P^\prime+\frac{\psi^\prime}{\psi} +\frac{2hd+1}{2x}
		\in \mathbf{C}(x)
	\end{equation}
matches precisely the case 1 of Theorem \ref{T:classification}. Hence the equation
	\[
		x^2 y^{\prime\prime}=h^2\big(-\sum_{j=s}^{\ell}K_j x^{jh} -1/4\big)y
	\]
and \eqref{E:bessel-generalised} possess Liouvillian solution.
\end{proof}

 \section{Proof of Theorem \ref{T:determinant_soln}}\label{S:determinant_soln}

\begin{proof}
Suppose that if a solution $f$ of \eqref{E:BHE-periodic} has $\lambda(f)<+\infty$. Bank and Laine's result asserts that there exist complex constants $d,\, d_j$ and a polynomial $P_n(\zeta)$ of degree $n$ such that
\begin{align}
f(z)=P_n(e^z)\exp\Big(d_1\,e^z+d_2\, e^{2z}+d\,z \Big),\label{bank-laine}
\end{align}
with
\[
P_n(\zeta)=\sum_{k=0}^n q_k\zeta^k,\quad q_n\neq 0.
\]
Substitution of \eqref{bank-laine} into \eqref{E:BHE-periodic} results in
\begin{align}
4d_2^2-1=0,\quad 4d_1d_2+K_3=0,\quad d^2+K_0=0.
\end{align}
Then we introduce notation $\epsilon_{\infty}=\pm 1$, $\epsilon_0=\pm 1$ and write $d_2={\epsilon_{\infty}}/{2}, \, d_1=-{\epsilon_\infty K_3}/{2},\, d=\epsilon_0\sqrt{-K_0}$.
Thus we obtain the solution \eqref{E:BHE-periodic-soln}.

It follows from Theorem \ref{T:oscillation-galois} that if a solution $f$ of \eqref{E:BHE-periodic} has $\lambda(f)<+\infty$, then the function $y=x^{1/2}\Psi(x)$, where $\Psi(x)=f(z)$, $x=e^z$ is a Liouvillian solution to the differential equation \eqref{E:BHE-normal-BL}. Thus, we apply Kovacic's algorithm (case 1) to the equation \eqref{E:BHE-normal}.  It turns out that we can identify the \eqref{E:BHE-normal-BL} with the \textit{biconfluent Heun equation} (BHE) \cite{Ronv1995}, written in the following normalised form,
	\begin{equation}
		\label{E:BHE-normal}
			y^{\prime\prime}=\Big(x^2+\beta x+\frac{\beta^2}{4}-\gamma+\frac{\delta}{2 x}+\frac{\alpha^2-1}{4x^2}\Big) y,
	\end{equation}
where $\alpha,\, \beta,\, \gamma,\, \delta$ are some constants. Equation
	\begin{equation}
		\label{E:BHE}
			xy^{\prime\prime}+(1+\alpha-\beta\,x-2x^2)y^{\prime}
+\Big((\gamma-\alpha-2)x-\frac{1}{2}(\delta+(1+\alpha)\beta) \Big)y=0,
	\end{equation}
is called the canonical form of BHE.
Duval and Loday-Richard \cite{Duval_Loday1992} have already applied Kovacic's algorithm to the equation \eqref{E:BHE-normal} to obtain criteria for Liouvillian solutions. In addition to the fact that they did not give details of their computation in \cite[Prop. 13]{Duval_Loday1992}, see also \cite{Morales-Ruiz2015}, it would be difficult for us to elaborate on the dependence of the different set of coefficients, namely $K_3,\cdots, K_0$ from the \eqref{E:BHE-periodic}. In particular, we shall derive four different sets of $n+1$ solutions of the from \eqref{E:BHE-periodic-soln} as emphasised in the remark after the theorem, so we judge it appropriate in working out details of the algorithm here for the sake of completeness, especially given our very different motivation, context and notation.

Kovacic's algorithm consists of identifying the set of poles  $\{c\}$ (both finite and $\infty$) of $r$, constructing a suitably defined rational function denoted by $[\sqrt{r}]_c$ based on the Laurent expansion of $r$ at the pole $c$, and the corresponding tuple $(\alpha_c^+,\, \alpha_c^-)$ of complex numbers. In the case of \eqref{E:BHE-normal-BL} under consideration, our $r$ can only have poles at $0$ and $\infty$.

According to Kovacic's algorithm (see the Appendix), there are three cases to consider:
	\begin{enumerate}
		\item $K_0=-\frac{1}{4}, K_1=0$,
		\item $K_0=-\frac{1}{4}, K_1\neq 0$,
		\item $K_0\neq -\frac{1}{4}$.
	\end{enumerate}
We use the notation $\mathbf{Z}$ and $\mathbf{N}$ to denote all integers and all positive integers respectively. The generalised Bessel equation \eqref{E:BHE-normal-BL} has Liouvillian solutions if and only if it falls into the Case (1) of Kovacic algorithm and we claim that one of
the following conditions is fulfilled.
\begin{enumerate}
\item $K_0=-\frac{1}{4}, K_1=0$ and $-\epsilon_{\infty}(K_2+\frac{K_3^2}{4})\in 2\mathbf{Z}+1$.  The monic polynomial $P_n(x)$ of degree $n$ in the Liouvillian solution satisfies
\begin{align}\label{E:case1-1}
P_n''+2\epsilon_{\infty}(x-\frac{K_3}{2})P_n'+(\epsilon_{\infty}+K_2+\frac{1}{4}K_3^2)P_n=0.
\end{align}

\item $K_0=-\frac{1}{4},\, K_1\neq 0$ and $-\epsilon_\infty(K_2+\frac{1}{4}K_3^2)=2n+3\in 2\mathbf{N}+1$. Then $P_n(x)$ in the Liouvillian solution satisfies
\begin{align}
	\label{E:case1-2}
& xP_n''(x)+(2 -\epsilon_\infty K_3x+2\epsilon_\infty x^2)P_n'\notag\\
&\qquad \quad+\Big[K_1-\epsilon_\infty K_3+(3\epsilon_\infty +K_2+\frac{1}{4}K_3^2)x \Big]P_n=0.
\end{align}

\item $K_0\neq -\frac{1}{4}$ and  $\epsilon_0,\epsilon_{\infty} \in \{\pm 1\}$ such that $-\epsilon_{\infty}(K_2+\frac{1}{4}K_3^2)-\epsilon_0\sqrt{-4K_0}=2(n+1) \in 2\mathbf{N}$.  $P_n(x)$ in the Liouvillian solution satisfies
\begin{align}\label{E:case1-3}
&xP_n''(x)+(1+2\epsilon_0\sqrt{-K_0}-\epsilon_{\infty} K_3x+2\epsilon_{\infty} x^2)P_n'\nonumber\\
&\quad+\Big[K_1-\frac{1}{2}\epsilon_{\infty}(1+2\epsilon_0\sqrt{-K_0})K_3
+(2\epsilon_{\infty}+2\epsilon_0\epsilon_{\infty}\sqrt{-K_0} +K_2+\frac{1}{4}K_3^2)x \Big]P_n=0.
\end{align}
\end{enumerate}
The case \eqref{E:case1-1} reduces essentially to a Hermite equation whose structure is so degenerate which already falls outside the scope of being a genuine BHE case. The case \eqref{E:case1-2} is a special case of \eqref{E:case1-3} and so we omit the details. In the following we consider the third case when $K_0\neq -\frac{1}{4}$ to illustrate the Kovacic algorithm.
In this situation, we have
	\[
		r(x)=\frac{-1/4-K_0}{x^2}+\frac{-K_1}{x}-K_2-K_3x+x^2.
	\]
We first need to identify the singularities of $r$, and they are $\{0,\, \infty\}$. So the set $\Gamma=\Gamma^\prime\cup\{\infty\}=\{0\}\cup\{\infty\}$ and the only finite pole is $c=0$. Then it is obvious that the order of $r(x)$ at $x=0$ is two, i.e., $o(r_0)=2$. According to Case 1, Step 1 in the Appendix, we deduce that the rational function $[\sqrt{r}]_0\equiv 0$, $r=\cdots+b x^{-2}$ with $b=-1/4-K_0$ so that the tuple $(\alpha_0^+,\, \alpha_0^-)$ of numbers are given by the expression
	\begin{equation}
		\alpha_0^{\epsilon_0}=\frac{1+\epsilon_0\sqrt{1+4b}}{2}=\frac{1+\epsilon_0\sqrt{-4K_0}}{2}
=\frac12+\epsilon_0\sqrt{-K_0},\quad \epsilon_0=\pm 1.
	\end{equation}
	Similarly, it is easy to spot that the only other pole being at $\infty$ also has order $-2$, i.e., $o(r_\infty)=2-4=-2$. Then the Case (1), Step  1 of Kovacic's algorithm in the Appendix again asserts that we have  $[\sqrt{r}]_\infty\equiv x-\frac{K_3}{2}$, and that the tuple $(\alpha_\infty^+,\, \alpha_\infty^-)$ of numbers are given by
\begin{align}
\alpha_\infty^{\epsilon_\infty}=\frac{1}{2}\Big[-\epsilon_\infty(K_2+\frac{K_3^2}{4})-1\Big],
\quad \epsilon_{\infty}=\pm 1.
\end{align}	
	
We continue the algorithm to the next step, the Step 2, where we define the set of positive integers $n$:
\begin{align*}
D=\{ n\in \mathbf{N}: n=\alpha_{\infty}^{\epsilon_{\infty}}-\sum_{c\in \Gamma'}\alpha_c^{\epsilon_c},\ \forall(\epsilon_p)_{p\in \Gamma} \}.
\end{align*}
and we can derive
	\begin{equation}
		\label{E:quantization}
		 n=\alpha_\infty^{\epsilon_\infty}-\alpha_0^{\epsilon_0}=-1+\frac{1}{2}\Big[-\epsilon_\infty\Big(K_2+\frac{K_3^2}{4}\Big)-\epsilon_0\sqrt{-4K_0}\Big],
	\end{equation}
holds for some non-negative integer $n$ according to the assumption \eqref{E:condition_I}. Hence  $1=\#(D)>0$. We now construct the rational function $\omega$ according to the Step 2:
	\begin{equation}
 \omega=\epsilon_\infty\cdot[\sqrt{r}]_\infty+\epsilon_0\cdot[\sqrt{r}]_0+\frac{\alpha_0^{\epsilon_0}}{x}=\epsilon_\infty(x-\frac{K_3}{2})
+\frac{\alpha_0^{\epsilon_0}}{x},
	\end{equation}
so that	
	\begin{equation}
		\label{E:approx-root}
		\omega'+\omega^2-r=2\epsilon_\infty  +K_2+\frac{K_3^2}{4}+\epsilon_\infty\epsilon_0\sqrt{-4K_0}+\frac{K_1-\epsilon_\infty\alpha_0^{\epsilon_0}K_3}{x},
	\end{equation}
with $\epsilon_{\infty}=\pm 1$. Substituting the equation \eqref{E:quantization} or equivalently the equation \eqref{E:condition_I} into \eqref{E:approx-root} simplifying it to
	\begin{equation}
		\label{E:approx-root-2}
		\omega'+\omega^2-r=-2\epsilon_\infty n+\frac{K_1-\epsilon_\infty\alpha_0^{\epsilon_0}K_3}{x}.
	\end{equation}
It remains to solve for the differential equation \eqref{E:Riccati-linearlized} for a polynomial $P_n$
as stated in the Step 3 in Case (1) of Kovacic's algorithm in the Appendix. That is,  we need to solve for the differential equation
\begin{align}\label{E:case1-3*}
xP_n''(x) +(1+2\epsilon_0\sqrt{-K_0} &-\epsilon_{\infty} K_3\,x+2\epsilon_{\infty} x^2)P_n'\nonumber\\
&\quad+\big(-2\epsilon_\infty n x+{K_1-\epsilon_\infty\alpha_0^{\epsilon_0}K_3}\big)P_n=0
\end{align}
which is a simplified form of the equation \eqref{E:case1-3}. Substitute
\begin{align*}
P_n=\sum_{m=0}^n \frac{A_m}{(1+2\epsilon_0\sqrt{-K_0})_m}\frac{x^m}{m!},\quad (\mu)_n=\frac{\Gamma(\mu+n)}{\Gamma(\mu)}, \quad n \geq 0,
\end{align*}
into  \eqref{E:case1-3}, we obtain the three-term recursion relation
\begin{align} \label{E:three-term}
& A_0=1, \quad A_1=-k_1\notag
\\
& A_{m+2}+(k_1-\epsilon_{\infty}(m+1)K_3)A_{m+1}\notag
\\
&\qquad+(m+1)(m+1+2\epsilon_0\sqrt{-K_0})(k_2+2\epsilon_\infty m)A_m=0,
\end{align}
with $k_1$ and $k_2$ given by \eqref{E:k_s}
which results in the determinant \eqref{E:Det} being zero.   That is, there are $n+1$, possibly repeated, roots of $K_1$. Let us first suppose the inequality \eqref{E:distinct} holds when $\epsilon_\infty=-1$. Hence $1+2\epsilon_0\sqrt{-K_1}>0$. However, this expression from equation \eqref{E:case1-3*}  corresponds to the parameter $1+\alpha>0$ of equation \eqref{E:BHE}. It follows from a theorem of Rovder \cite[Theorem 1]{Rovder1974} that the determinant associates with \eqref{E:three-term} admits $n+1$ distinct real values of $K_1-\epsilon_\infty\alpha_0^{\epsilon_0}K_3$ and hence $K_1$. It follows that the equation \eqref{E:case1-3*} has $n+1$ distinct polynomial solutions.

Thus the Liouvillian solution to \eqref{E:BHE-normal-BL} is expressed by
\begin{align}
y(x)=P_n(x)\exp\Big(\int \omega(x)dx\Big)=P_n(x)\, x^{\alpha_0^{\epsilon_0}}\,\exp\Big[\frac{\epsilon_{\infty}}{2}\, (x^2-K_3\,x)\Big].
\end{align}
From the transformation \eqref{E:transform}, we obtain the solution to the Hill equation \eqref{E:BHE-periodic}
\begin{align}
f(z)=P_n(e^z)\exp\Big(\frac{\epsilon_\infty}{2}\,e^{2z}-\frac{\epsilon_\infty}{2}K_3\, e^z+\epsilon_0\sqrt{-K_0}z \Big).
\end{align}
The cases $(1)$ and $(2)$  can be considered similarly.

\end{proof}

 \section{Orthogonal solutions of Hill equation}\label{S:orthogonality}
We next show for the first time that there are novel orthogonality amongst the non-oscillatory/Liouvillian  solutions $\textrm{BH}_{n,\,\nu}(z)$  (\ref{E:BHE-periodic-soln}) for the periodic BHE (\ref{E:BHE-periodic}) from the Bank-Laine theory. These are analogous results for Lam\'e equation which we now review.
\medskip

  It is known that the classical Lam\'e polynomials \cite[\S9.2--9.3]{Arscott1964} are eigen-solutions to the Lam\'e equation
    \begin{equation}
        \label{E:lame-2}
            w^{\prime\prime}(z)+[h-n(n+1)\,k^2\textrm{sn}^2z]\,w(z)=0,
    \end{equation}
where $n$ is chosen to be a non-negative integer, then there are $n/2$ or $(n+1)/2$ distinct choices of $h$, depending on when $n$ is even and odd respectively, thus leading to $2n+1$ distinct choices of \textit{Lam\'e polynomials}, and each of which assumes the form
	\begin{equation}
		\label{E:Lame-poly}
			\mathrm{sn\,}^rz \, \mathrm{cn\,}^sz\,  \mathrm{dn\,}^t z\  F_p(\mathrm{sn\,}^2 z)
	\end{equation}
where $r,\, s,\, t$ can take integer values either $0$ or $1$, subject to the constraint  $r+s+t+2p=n$. This leads to eight different combinations of $(r,\, s,\, t)$, called \textit{species} \cite[p. 201]{Arscott1964}, and a total of $2n+1$ Lam\'e polynomials polynomials. The coefficients of polynomial $F_p$ in (\ref{E:Lame-poly}) satisfies a three-term recursion. It is known from  19th century texts that Lam\'e polynomials of the same type satisfy orthogonality relation given by, for example,
    \begin{equation}
        \label{E:lame-orth}
            \int_{-2K}^{2K} E_n^{m_1}(z) \, E_n^{m_2}(z)\, dz=0
    \end{equation}
whenever $m_1\not= m_2$ ($0\le m_1,\,m_2\le n$), where the \textit{real period} and \textit{imaginary period} of $\textrm{sn\,}z$ are denoted by $4K$ and $2iK^\prime$ respectively (e.g. \cite[p. 370]{Heine}, \cite[p. 466]{Hobson}; see also \cite[\S9.4]{Arscott1964}). That is, two Lam\'e polynomials of the same spice must have the same degree $n$, and their orthogonality is over the other parameter $m\ (0\le m\le n)$, which differs from conventional orthogonality we encounter from other well-known examples.
On the other hand, we have
	\[
		 \int_{-2K}^{2K} (E_n^{m}(z))^2 \, dz\not= 0
	\]
whenever $m_1=m=m_2$ \cite[p. 206]{Arscott1964}.
\medskip

We next show that the set of parameters $(\epsilon_0,\, \epsilon_\infty)=(1,\, -1)$ associates to
 $n+1$ eigen-solutions $\mathrm{BH}_{n,\,\nu}(z)$  defined in (\ref{E:BHE-periodic-soln}) exhibit similar orthogonality as the (\ref{E:lame-orth}). That is, any two functions $ \mathrm{BH}_{n,\,\nu}(z)$ of the same degree $n$ out of the $n+1$  solutions defined by (\ref{E:BHE-periodic-soln}) are orthogonal with respect to the weight $e^z$ over $(-\infty,\, +\infty)$, and \textit{different parameters} $\nu\ (0\le \nu\le n)$ (instead of different degrees $n$).
\medskip

Moreover, these eigen-solutions $\mathrm{BH}_{n,\,\nu}(z)$ satisfy the \eqref{E:boundary}, i.
e.,
	\begin{equation}
		\label{E:boundary_2}
			\lim_{z\to \pm \infty} \mathrm{BH}_{n,\,\nu}(z) =0
	\end{equation}
 which serves as a boundary condition of a Sturm-Liouville problem for time-independent linear Schr\"odinger equation including our equation \eqref{E:BHE-eigenstate-II} below studied by Bl\'azquez-Sanz and Yagasaki \cite{Blazquez_Yagasaki_2012}. There the authors showed that the corresponding eigen-solutions correspond to discrete eigenvalues are Liouvillian.  We note that we require the $\mathrm{BH}_{n,\,\nu}(z)$ to satisfy  the boundary condition \eqref{E:boundary_2} for the orthogonality result below. The results in \cite{Blazquez_Yagasaki_2012} provide  another connection of differential Galois theory with non-oscillatory solutions of \eqref{E:BHE-eigenstate-II} below.

\begin{theorem}\label{T:simple-orth} Let $n$ be a non-negative integer. Suppose that the coefficients $K_3,\, K_2$ and $K_0<0$ of the Hill operator \eqref{E:BHE-operator} are all real, satisfy the relation
		\begin{equation}
        \label{E:condition_III}
           \frac{K_3^2}{4}+K_2-2\sqrt{-K_0}=2(n+1),
    \end{equation}
for each $n$. Then the
 equation
		\begin{equation}\label{E:BHE-eigenstate-II}
       -H\,f(z):=-\Big(e^{-z}\frac{d^2}{dz^2}-e^{3z}+K_3e^{2z}+K_2e^z+K_0e^{-z}\Big)f(z)=\big(K_{1}\big)_{n,\, \nu}\, f,\qquad 0\le\nu\le n,
    \end{equation}
admits $n+1$ linearly independent solutions $\mathrm{BH}_{n,\, \nu}(z),\ (0\le\nu\le n)$ of the form
	\begin{equation}
        \label{E:BHE-periodic-soln-2}
            \mathrm{BH}_{n,\,\nu}(z)= e^{nz}Y_{n,\,\nu}(e^{-z})\, \exp\bigg[-\frac{1}{2}\big(e^{2z}-
            K_3e^z\big)\,+\sqrt{-K_0}\,z\bigg],
    \end{equation}
to \eqref{E:BHE-eigenstate-II} given in \eqref{E:BHE-periodic-soln} corresponding to the $n+1$ distinct real values of $\big(K_{1}\big)_{n,\, \nu}\ (0\le \nu\le n)$ satisfy the orthogonality, namely
    \begin{equation}
		\label{E:simple-orth-2}
            \int_{-\infty}^{\infty} \mathrm{BH}_{n,\, \mu}(x)\ \mathrm{BH}_{n,\, \nu}(x) \,e^x\, dx=0
    \end{equation}
whenever $\mu\not=\nu$, and
	\begin{equation}
		\label{E:single-norm}
			 \int_{-\infty}^{\infty} (\mathrm{BH}_{n,\, \mu}(x))^2 \,e^x\, dx\not=0
    \end{equation}
whenever $\mu=\nu$.
\end{theorem}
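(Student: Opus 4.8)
The plan is to recognise \eqref{E:BHE-eigenstate-II} as a weighted, formally self-adjoint Sturm--Liouville eigenvalue problem in which $K_1$ is the spectral parameter and $e^z$ is the weight. Multiplying the operator equation $-Hf=(K_1)_{n,\nu}f$ through by $e^z$ returns the Hill form \eqref{E:BHE-periodic},
\[
f''+\big(-e^{4z}+K_3e^{3z}+K_2e^{2z}+K_0\big)f=-(K_1)_{n,\nu}\,e^z f,
\]
so that the Schr\"odinger operator $\mathcal{L}f:=f''+\big(-e^{4z}+K_3e^{3z}+K_2e^{2z}+K_0\big)f$, which carries no first-order term and is therefore formally self-adjoint, satisfies $\mathcal{L}f=-(K_1)_{n,\nu}\,e^z f$. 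This displays $e^z$ as the natural weight and $-(K_1)_{n,\nu}$ as the eigenvalue, which is exactly the configuration under which eigenfunctions attached to distinct eigenvalues become orthogonal with respect to $e^z$.

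First I would fix indices $\mu\neq\nu$, abbreviate $f_\mu=\mathrm{BH}_{n,\mu}$, $f_\nu=\mathrm{BH}_{n,\nu}$, and form the Lagrange identity: multiply the equation for $f_\mu$ by $f_\nu$, the equation for $f_\nu$ by $f_\mu$, and subtract. The common potential terms cancel, leaving $f_\mu''f_\nu-f_\mu f_\nu''=\big[(K_1)_{n,\nu}-(K_1)_{n,\mu}\big]e^z f_\mu f_\nu$. Since the left-hand side is the total derivative $\tfrac{d}{dz}\big(f_\mu'f_\nu-f_\mu f_\nu'\big)$, integration over $\mathbf{R}$ gives
\[
\big[f_\mu'f_\nu-f_\mu f_\nu'\big]_{-\infty}^{\infty}=\big[(K_1)_{n,\nu}-(K_1)_{n,\mu}\big]\int_{-\infty}^{\infty}e^z f_\mu f_\nu\,dz.
\]
Because Theorem \ref{T:determinant_soln} (via Rovder's theorem under the stated hypotheses $K_0<0$, $\epsilon_\infty=-1$ and \eqref{E:distinct}) guarantees that the $n+1$ values $(K_1)_{n,\nu}$ are real and \emph{distinct}, the bracketed coefficient on the right is nonzero, so \eqref{E:simple-orth-2} follows the instant the bilinear concomitant on the left is shown to vanish.

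The main obstacle, and the only place the explicit shape \eqref{E:BHE-periodic-soln-2} is really needed, is to verify that $f_\mu'f_\nu-f_\mu f_\nu'$ vanishes at both ends. As $z\to+\infty$ the factor $\exp\big(-\tfrac12 e^{2z}\big)$ forces $\mathrm{BH}_{n,\nu}$ and its derivative to decay super-exponentially, so the concomitant vanishes there. As $z\to-\infty$ the product $e^{nz}Y_{n,\nu}(e^{-z})$ tends to a finite constant (the leading coefficient of the degree-$n$ polynomial $Y_{n,\nu}$) and $\exp\big(-\tfrac12(e^{2z}-K_3e^z)\big)\to1$, so the surviving factor is $\exp(\sqrt{-K_0}\,z)$, which tends to $0$ since $K_0<0$ makes $\sqrt{-K_0}>0$; differentiating does not change this exponential rate, whence each of $f_\mu,f_\nu,f_\mu',f_\nu'$ is $O\big(e^{\sqrt{-K_0}\,z}\big)$ and the concomitant is $O\big(e^{2\sqrt{-K_0}\,z}\big)\to0$. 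This is precisely the quantitative content of the boundary condition \eqref{E:boundary_2}, and it simultaneously shows that the weighted integrals converge at both ends. With both endpoints handled, the left side of the displayed identity is zero and \eqref{E:simple-orth-2} follows.

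Finally, for the non-degeneracy \eqref{E:single-norm} I would note that, under the real hypotheses on $K_3,K_2,K_0$ and the reality of the roots $(K_1)_{n,\nu}$, the recursion \eqref{E:three-term} has real coefficients, so $Y_{n,\nu}$ has real coefficients and $\mathrm{BH}_{n,\mu}$ is real-valued on $\mathbf{R}$. Then the integrand $(\mathrm{BH}_{n,\mu})^2\,e^z$ is nonnegative, continuous, and not identically zero, whence the integral is strictly positive and in particular nonzero. I expect the asymptotic verification of the vanishing concomitant at $-\infty$ --- balancing the constant limit of $e^{nz}Y_{n,\nu}(e^{-z})$ against the decaying exponential $e^{\sqrt{-K_0}\,z}$ --- to be the most delicate bookkeeping, whereas the Sturm--Liouville skeleton is entirely routine.
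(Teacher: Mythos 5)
Your proposal is correct and follows essentially the same route as the paper: the Lagrange identity with cross-multiplication and integration by parts over $\mathbf{R}$, the vanishing of the bilinear concomitant at $\pm\infty$ deduced from the explicit representation \eqref{E:BHE-periodic-soln-2} (super-exponential decay from $\exp(-\tfrac12 e^{2z})$ at $+\infty$, the factor $e^{\sqrt{-K_0}\,z}$ with $\sqrt{-K_0}>0$ at $-\infty$), the distinctness of the real eigenvalues $(K_1)_{n,\nu}$ imported from Theorem \ref{T:determinant_soln} via Rovder's theorem, and strict positivity of the weighted square integrand for \eqref{E:single-norm}. Your treatment of the $z\to-\infty$ end (identifying the finite limit of $e^{nz}Y_{n,\nu}(e^{-z})$ as the leading coefficient of $Y_{n,\nu}$) is in fact slightly more explicit than the paper's, but it is the same argument.
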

\medskip

We further observe that one can define the inner product by integrating the Lam\'e polynomials over a \textit{complex period} $[K-2iK^\prime,\, K+2iK^\prime]$ instead of the real period $[-2K,\, 2K]$ \cite[\S9.4]{Arscott1964}
considered above. That is,
	 \begin{equation}
        \label{E:lame-orth-2}
            \int_{K-2iK^\prime}^{K+2iK^\prime} E_n^{m_1}(z) \, E_n^{m_2}(z)\, dz=0
    \end{equation}
whenever $m_1\not= m_2$.
\bigskip

We observe from (\ref{E:lame-orth}) and (\ref{E:simple-orth-2}) that although the corresponding Lam\'e polynomial solutions and periodic BHE polynomial solutions of same degree respectively are orthogonal with respect to the parameters $\mu,\, \nu$, it is not clear if polynomials of \textit{different degrees} are orthogonal to each other. It turns out that an orthogonality exists for \textit{polynomials} of different degrees when they are formed from products of two Lam\'e polynomials of same degree but in different parameters. The following \textit{double orthogonality} can be found in \cite[pp. 379--381]{Heine}, \cite[pp. 467--469]{Hobson}, \cite[\S9.4]{Arscott1964}:
    \begin{equation}
        \label{E:lame-double-orth}
            \int_{-2K}^{2K}\int_{K-2iK^\prime }^{K+2iK^\prime} E_m^\mu(z)\, E_m^\mu(s)\,E_n^\nu(z)\, E_n^\nu(s)\,
            (\textrm{sn\,}^2z-\textrm{sn\,}^2s)\,dz\, ds=0
    \end{equation}
\medskip
whenever $n\not= m$, while if $m=n$, then the orthogonality still hold when $\mu\not=\nu$, for
$0\le\mu,\, \nu\le n$. On the other hand,
	\[
		 \int_{-2K}^{2K}\int_{K-2iK^\prime }^{K+2iK^\prime} [E_m^\mu(z)\, E_m^\mu(s)]^2\,
            (\textrm{sn\,}^2z-\textrm{sn\,}^2s)\,dz\, ds\not =0.
    \]
\medskip

 We show below that suitable product of two non-oscillatory/Liouvillian solutions of equation \eqref{E:BHE-periodic} also possesses a double orthogonality, over $\mathbf{R}\times(\mathbf{R}+i\pi)$, that is analogous to that of the Lam\'e equation discussed above, and that appears to be new.
\medskip

\begin{theorem}
    \label{T:double-orth}
     Let $K_3$ and $K_2$ be given real numbers. Let $m$ and $n$ be non-negative integers such that {$(K_0)_n<0$ and $(K_0)_m<0$} from Theorem \ref{T:determinant_soln} satisfy
		\begin{equation}
        \label{E:condition_II}
            \frac{K_3^2}{4}+K_2-2\sqrt{-(K_0)_n}=2(n+1),
    \end{equation}
 and
     \begin{equation}
        \label{E:condition_IIII}
            \frac{K_3^2}{4}+K_2-2\sqrt{-(K_0)_m}=2(m+1), 
    \end{equation}
respectively. Suppose $\mathrm{BH}_{n,\, \mu}\ (0\le \mu\le n)$ are solutions of the differential equation \eqref{E:BHE-eigenstate-II} as defined in the Theorem \ref{T:simple-orth} with finite exponent of convergence of zeros, and  $\mathrm{BH}_{m,\,\nu}\ (0\le \nu\le m)$ are the corresponding solutions to the equation \eqref{E:BHE-eigenstate-II} with $n$ replaced by $m$. Then we have
        \begin{equation}
            \label{E:double-orth}
             \int_{-\infty}^{\infty}\int_{-\infty+i\pi}^{\infty+i\pi}\, \mathrm{BH}_{n,\, \mu}(z)\, \mathrm{BH}_{n,\, \mu}(s)\, \mathrm{BH}_{m,\, \nu}(z)\,\mathrm{BH}_{m,\, \nu}(s)\,
              (e^z-e^s)\, dz\,ds=0
        \end{equation}
whenever $(n,\, \mu)\not=(m,\, \nu)$. Moreover,
		\begin{equation}
			\label{E:double-norm}
			\int_{-\infty}^{\infty}\int_{-\infty+i\pi}^{\infty+i\pi} [\mathrm{BH}_{n,\, \mu}(z)\, \mathrm{BH}_{n,\, \mu}(s)]^2\,
              (e^z-e^s)\, dz\,ds\not=0,
        	\end{equation}
if $(n,\, \mu)=(m,\, \nu)$.
\end{theorem}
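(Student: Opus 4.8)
The plan is to avoid routing the argument through a two–dimensional eigenvalue problem (as in the classical Lamé treatment) and instead to exploit directly the product structure of the integrand together with two Wronskian identities, one on each integration contour. Throughout write $f=\mathrm{BH}_{n,\mu}$ and $g=\mathrm{BH}_{m,\nu}$; by Theorem \ref{T:determinant_soln} these are entire solutions of \eqref{E:BHE-periodic} with parameter/eigenvalue data $((K_0)_n,(K_1)_{n,\mu})$ and $((K_0)_m,(K_1)_{m,\nu})$ respectively, the coefficients $K_3,K_2$ being common. Set $A=fg$, $P=(K_0)_n-(K_0)_m$ and $Q=(K_1)_{n,\mu}-(K_1)_{m,\nu}$. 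First I would record the decay: from the explicit form \eqref{E:BHE-periodic-soln-2} the factor $\exp[-\tfrac12 e^{2z}]$ forces super-exponential decay as $\mathrm{Re}\,z\to+\infty$, while as $\mathrm{Re}\,z\to-\infty$ the product $e^{nz}Y_{n,\mu}(e^{-z})$ tends to a constant and the surviving factor $\exp[\sqrt{-K_0}\,z]$ (with $\sqrt{-K_0}>0$ since $K_0<0$) forces exponential decay; the same holds along $\mathbf{R}+i\pi$ after writing $s=t+i\pi$, since then $e^{2s}=e^{2t}$ and $e^s=-e^t$. Hence $f,g$ and their derivatives vanish at both ends of both contours, securing absolute convergence of the double integral and the vanishing of all boundary terms below.

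Next I would factor the integral. Because the domain is the product $\mathbf{R}\times(\mathbf{R}+i\pi)$ and the kernel splits as $e^z-e^s$, Fubini's theorem gives
\[
I=\Big(\int_{\mathbf{R}}A\,e^z\,dz\Big)\Big(\int_{\mathbf{R}+i\pi}A\,ds\Big)-\Big(\int_{\mathbf{R}}A\,dz\Big)\Big(\int_{\mathbf{R}+i\pi}A\,e^s\,ds\Big)=\beta\gamma-\alpha\delta,
\]
with abbreviations $\alpha,\beta$ for the two integrals over $\mathbf{R}$ and $\gamma,\delta$ for the two over $\mathbf{R}+i\pi$. The crucial input is the Wronskian relation extracted from the two second-order equations: as an identity of entire functions $(gf'-fg')'=gf''-fg''=-A\,(P+Qe^{z})$, so integrating along each contour and discarding the vanishing boundary terms yields the two linear relations $P\alpha+Q\beta=0$ and $P\gamma+Q\delta=0$.

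The vanishing \eqref{E:double-orth} then follows by a short case split. If $n\neq m$ then $P\neq 0$: indeed \eqref{E:condition_II}–\eqref{E:condition_IIII} give $\sqrt{-(K_0)_n}=\tfrac18K_3^2+\tfrac12K_2-(n+1)$, strictly monotone in $n$, so $(K_0)_n\neq(K_0)_m$; solving the two relations for $\alpha$ and $\gamma$ and substituting collapses $\beta\gamma-\alpha\delta$ to $0$. If instead $n=m$ and $\mu\neq\nu$, then $P=0$ while $Q\neq0$, because the second alternative of Theorem \ref{T:determinant_soln} provides $n+1$ \emph{distinct} eigenvalues $(K_1)_{n,\cdot}$; the relations now force $\beta=\delta=0$, whence $I=0$ again. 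Together these cover every pair with $(n,\mu)\neq(m,\nu)$.

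For the non-degeneracy \eqref{E:double-norm} take $f=g=\mathrm{BH}_{n,\mu}$. On $\mathbf{R}$ the function is real, so $f(z)^2\ge 0$; writing $s=t+i\pi$ one finds $f(s)=e^{i\pi\sqrt{-K_0}}R(t)$ with $R$ real, hence $f(s)^2=e^{2i\pi\sqrt{-K_0}}R(t)^2$, while the kernel becomes $e^z-e^s=e^z+e^t>0$ throughout the region. Pulling out the constant phase, $I$ equals $e^{2i\pi\sqrt{-K_0}}\int_{\mathbf{R}}\int_{\mathbf{R}}f(z)^2R(t)^2(e^z+e^t)\,dz\,dt$, the integral of a nonnegative and not identically vanishing integrand, so $I\neq0$. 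I expect the genuine difficulty here to be analytic rather than algebraic: one must establish the decay along $\mathbf{R}+i\pi$, and in particular as $\mathrm{Re}\,z\to-\infty$ where $e^{nz}Y_{n,\mu}(e^{-z})$ only tends to a constant and the decay is carried solely by the $\exp[\sqrt{-K_0}\,z]$ factor, sharply enough to license both Fubini and the discarding of the Wronskian boundary terms; once the contour integrals are known to converge, the algebraic core is immediate.
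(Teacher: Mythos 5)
Your proposal is correct, and it reaches \eqref{E:double-orth}--\eqref{E:double-norm} by a genuinely different route from the paper. The paper follows the classical Heine--Hobson scheme: it forms the two-variable products $F_{n,\mu}(z,s)=\mathrm{BH}_{n,\mu}(z)\,\mathrm{BH}_{n,\mu}(s)$, observes that they satisfy the hyperbolic equations \eqref{E:PDE_I}--\eqref{E:PDE_II}, in which the (different) constants $(K_0)_n,\ (K_0)_m$ cancel so that the two PDEs differ only in the $K_1$-coefficient, then cross-multiplies, subtracts, and integrates by parts twice over $\mathbf{R}\times(\mathbf{R}+i\pi)$, arriving at $\big[(K_1)_{n,\mu}-(K_1)_{m,\nu}\big]\cdot I=0$. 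You instead exploit the product structure directly: Fubini factorizes $I=\beta\gamma-\alpha\delta$, and the one-dimensional Lagrange identity $(gf'-fg')'=-(P+Qe^z)fg$, integrated along each contour with vanishing boundary terms, yields $P\alpha+Q\beta=0$ and $P\gamma+Q\delta=0$, after which linear algebra kills $I$. This buys something concrete: the paper's final step needs $(K_1)_{n,\mu}\neq(K_1)_{m,\nu}$, which Theorem \ref{T:determinant_soln}(2) guarantees only within a fixed degree, so for $n\neq m$ the paper's conclusion ``irrespective to the choices of $\nu$ and $\mu$'' tacitly assumes no accidental coincidence of $K_1$-eigenvalues across degrees; your argument needs only $P=(K_0)_n-(K_0)_m\neq 0$, which is immediate from \eqref{E:condition_II}--\eqref{E:condition_IIII} since $\sqrt{-(K_0)_n}=K_3^2/8+K_2/2-(n+1)$ is strictly monotone in $n$, and it moreover handles the subcase $Q=0$ painlessly ($\alpha=\gamma=0$), while the $P=0,\ Q\neq 0$ case recovers Theorem \ref{T:simple-orth}'s relation $\beta=0$ as a byproduct. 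The analytic input is identical in the two proofs --- decay of $\mathrm{BH}_{n,\mu}$ and its derivative at both ends of $\mathbf{R}$ and of $\mathbf{R}+i\pi$, the latter via $e^{2(t+i\pi)}=e^{2t}$ and $\sqrt{-K_0}>0$ --- and you verify it correctly; you rightly flag it as the real work, since it licenses both the factorization and the discarded Wronskian boundary terms. In the norm computation your bookkeeping is in fact slightly more careful than the paper's: extracting $\mathrm{BH}_{n,\mu}(t+i\pi)=e^{i\pi\sqrt{-K_0}}R(t)$ with $R$ real (legitimate, as $Y_{n,\mu}$ has real coefficients under the stated reality hypotheses) shows the integral is a unimodular constant times a positive quantity, whereas the paper passes to the weight $e^z+e^\xi>0$ without noting that $\mathrm{BH}^2_{n,\mu}(\xi+i\pi)$ is positive only up to the phase $e^{2i\pi\sqrt{-K_0}}$.
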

\medskip

\section{Proof of Theorem \ref{T:simple-orth}} \label{S:simple-orth}
\begin{proof}
We note that since we assume \eqref{E:condition_III} and
 $\sqrt{-K_0}>0$ (so that $1+2\sqrt{-K_0}>0$), so the \eqref {E:condition_I} and the inequality \eqref{E:distinct} are fulfilled respectively. Theorem \ref{T:determinant_soln} guarantees that there are $n+1$ distinct solutions to \eqref{E:BHE-periodic} of the form  \eqref{E:BHE-periodic-soln},
where we recall that $Y_{n,\, \nu}$ is a polynomial. Hence we see that $\mathrm{BH}_{n,\,\nu}(z)\to 0$ as $z=x\to+\infty$. To see that $\mathrm{BH}_{n,\,\nu}(z)\to 0$ as $z=x\to-\infty$ we only need to note that $\sqrt{-K_0}>0$ because according to the assumption that $K_0$ is real and $K_0<0$. Moreover, the derivative assumes the form
	\begin{equation}
        \label{E:BHE-periodic-soln-3}
        		\begin{split}
            \mathrm{BH}^\prime_{n,\,\nu}(z)= \Big[-e^{-z}Y^\prime_{n,\,\nu}(e^{-z})&+\Big(\Big(\frac{K_3}{2}\,e^z-e^{2z}\Big)+(n+ \sqrt{-K_0})\Big)Y_{n,\,\nu}(e^{-z})\Big]\\
            &\times \exp\bigg[\Big(\frac{K_3}{2}\,e^z-\frac{1}{2}e^{2z}\Big)+(n+\sqrt{-K_0})\,z\bigg],
            	\end{split}
    \end{equation}
which is similar to that of $\mathrm{BH}_{n,\,\nu}(z)$. A similar analysis as that of  $\mathrm{BH}_{n,\,\nu}(z)$ reveals that $\mathrm{BH}^\prime_{n,\,\nu}(z)\to 0$ as $z=x\to\pm\infty$.

 Let $n$, $m$ be non-negative integers and that the coefficients $K_3,\, K_2$ and $K_0<0$  satisfy the relation (\ref{E:condition_I}) and \eqref{E:distinct}. Then it follows from Theorem \ref{T:determinant_soln}  that the equation \eqref{E:BHE-periodic} possesses $n+1$ distinct solutions $\mathrm{BH_{n,\, \nu}}(z)$ $(0\le \nu\le n)$  with finite exponent of convergence of zeros given by \eqref{E:BHE-periodic-soln}.

 Let $f_{n,\, \nu}=\mathrm{BH}_{n\, \nu}$ and  $f_{n,\, \mu}=\mathrm{BH}_{n,\, \mu}$ ($0\le \nu,\, \mu\le n$) be two solutions to (\ref{E:BHE-periodic}) with finite exponent of convergence. That is, $f_{n,\, \nu}$ and  $f_{n,\, \mu}$  are solutions to
  	\begin{equation}
		\label{E:before-substraction}
            \begin{split}
                    f_{n,\, \mu}^{\prime\prime}(z) &+\left(-e^{4z}+K_{3}\, e^{3z}+K_{2}\,e^{2z}+\big(K_{1}\big)_{n,\, \mu}\, e^{z}+K_0\right)\,f_{n,\, \mu}(z)=0\\
                    f_{n,\, \nu}^{\prime\prime}(z) &+\left(-e^{4z}+K_{3}\, e^{3z}+K_{2}\,e^{2z}+\big(K_{1}\big)_{n,\, \nu}\, e^{z}+K_0\right)\,f_{n,\, \nu}(z)=0
            \end{split}
        \end{equation}
respectively.
\medskip

We subtract the two equations resulting from multiplying the first equation of \eqref{E:before-substraction} throughout by $f_{n,\, \nu}$ and the second equation of \eqref{E:before-substraction} throughout by $f_{n,\, \mu}$ respectively. Then we integrate the resulting equation over $(-\infty,\, \infty)$. This yields
        \begin{equation*}
            \begin{split}
            &\int_{-\infty}^{\infty} \big[f_{n,\, \nu}(z)f_{n,\, \mu}^{\prime\prime}(z)-f_{n,\, \nu}(z)f_{n,\, \mu}^{\prime\prime}(z)\big]\, dz\\
            &\qquad +\big[\big(K_{1}\big)_{n,\, \mu}-\big(K_{1}\big)_{n,\, \nu}\big]\,\int_{-\infty}^{\infty}   f_{n,\, \nu}\,(z)f_{n,\, \mu}(z) \,e^z\, dz=0.
            \end{split}
        \end{equation*}
We deduce from the explicit representations \eqref{E:BHE-periodic-soln-2} of $\mathrm{BH}_{n,\, \nu}(z)$ and \eqref{E:BHE-periodic-soln-3} of $\mathrm{BH}^\prime_{n,\, \nu}(z)$ that they both vanish at $\pm\infty$. So integration-by-parts of the above equation yields
        \begin{equation*}
            \begin{split}
             &\int_{-\infty}^{\infty}  f_{n,\, \nu}(z)f_{n,\, \mu}(z) \,e^z\, dz \\
            &={\big[\big(K_{1}\big)_{n,\, \mu}-\big(K_{1}\big)_{n,\, \nu}\big]^{-1}} \int_{-\infty}^{\infty}   \big[f_{n,\, \nu}(z)f_{n,\, \mu}^{\prime\prime}(z)-f_{n,\, \nu}(z)f_{n,\, \mu}^{\prime\prime}(z)\big]\, dz\\
             &={\big[\big(K_{1}\big)_{n,\, \mu}-\big(K_{1}\big)_{n,\, \nu}\big]^{-1}} \big[f_{n,\, \nu}(z)f_{n,\, \mu}^{\prime}(z)-f_{n,\, \nu}(z)\,f_{n,\, \mu}^{\prime}(z)\big]_{-\infty}^{\infty} \\
            &=0
            \end{split}
        \end{equation*}
thus proving that the two functions $f_{n,\, \nu}(z)$, $f_{n,\, \mu}(z)$ are orthogonal, since the corresponding characteristic values $\big(K_{1}\big)_{n,\, \mu},\, \big(K_{1}\big)_{n,\, \nu}$ are distinct. That is, whenever $\mu\not=\nu$ for $0\le \mu,\, \nu\le n$. This proves \eqref{E:simple-orth-2}.
\smallskip

Let us now consider the case when $\mu=\nu$.  Notice that the product between $(\mathrm{BH}_{n,\, \mu}(z))^2$ and the weight function $ e^z$ yields, under the assumption on $2 \sqrt{-K_0}+1>0$, that
		\begin{equation}
		\begin{split}
		(\mathrm{BH}_{n,\, \mu}(z))^2 \,e^z &=
		\big[Y_{n,\, \nu}(e^{-z})\big]^2 \exp\big[(K_3e^z-e^{2z})+2(n+ \sqrt{-K_0})z\big]\, e^z\\
		&=\big[Y_{n,\, \nu}(e^{-z})\big]^2 \exp\big[(K_3e^z-e^{2z})+(2n+1+2 \sqrt{-K_0})z\big]>0
		\end{split}
	\end{equation}
holds throughout the real-axis $\mathbf{R}$.  But the representation \eqref{E:BHE-periodic-soln-2} clearly shows that the product
	\[
		(\mathrm{BH}_{n,\, \mu}(z))^2\, e^z=\big[Y_{n,\, \nu}(e^{-z})\big]^2 \exp\big[(K_3e^z-e^{2z})+(2n+1+2 \sqrt{-K_0})z\big]
	\]
vanishes at $\pm\infty$ sufficiently fast  to guarantee that the integral
	\begin{equation*}
			 \int_{-\infty}^{\infty} \big(\mathrm{BH}_{n,\, \mu}(x)\big)^2 \,e^x\, dx
    \end{equation*}
converges and non-vanishing. This proves  \eqref{E:single-norm}.
\end{proof}
\medskip

 \section{Proof of Theorem \ref{T:double-orth}} \label{S:double-orth}
\begin{proof} The idea of the proof is classical by constructing a suitable pair of partial differential equations and applying integration-by-parts \cite[\S 276]{Hobson} (see also \cite[pp. 379--381]{Heine}.
Let $K_3,\ K_2$ be real, and $(K_0)_n,\ (K_0)_m$ satisfy the equations \eqref{E:condition_II} and \eqref{E:condition_IIII} respectively. Let $f_{n,\, \mu}(z)=\mathrm{BH}_{n,\, \nu}(z)$ and $f_{m,\, \nu}(z)=\mathrm{BH}_{m,\, \mu}(z)$ be two corresponding eigen-solutions of the \eqref{E:BHE-periodic} respectively. We define
       \[
            F_{n,\, \mu}(z,\, s):=f_{n,\, \mu}(z) f_{n,\, \mu}(s),\qquad
            F_{m,\, \nu}(z,\, s):=f_{m,\, \nu}(z) f_{m,\, \nu}(s)
        \]
to be complex functions of two variables $(z,\, s)$. Clearly they satisfy, respectively, the following partial differential equations
        \begin{align}
                \frac{\partial^2 F_{n,\, \mu}}{\partial z^2}-\frac{\partial^2 F_{n,\, \mu}}{\partial s^2}
                    &=\Big[-(e^{4z}-e^{4s})+ K_3\,(e^{3z}-e^{3s})\notag\\
                    &\qquad+K_{2}\,(e^{2z}-e^{2s})+\big(K_{1}\big)_{n,\, \mu}\, (e^{z}-e^s)\Big] \, F_{n,\, \mu}=0\label{E:PDE_I}\\
                    \frac{\partial^2 F_{m,\, \nu}}{\partial z^2}-\frac{\partial^2 F_{m,\, \nu}}{\partial s^2}
                    &=\Big[-(e^{4z}-e^{4s})+ K_3\,(e^{3z}-e^{3s})\notag\\
                    &\qquad +K_{2}\,(e^{2z}-e^{2s})+\big(K_{1}\big)_{m,\, \nu}\, (e^{z}-e^s)\Big] \, F_{m,\, \nu}=0\label{E:PDE_II}
            \end{align}

Subtracting the equations (\ref{E:PDE_I}) after multiplying  by $F_{m,\, \nu}$ from the equation (\ref{E:PDE_II}) after multiplying by $F_{n,\, \mu}$. We integrate the resulting difference with respect to the two variables $(z,\, x)$ from $-\infty$ to $\infty$ and $-\infty+\pi i$ to $\infty+\pi i$ respectively. This yields
	   \begin{equation*}
                \begin{split}
                     &\int_{-\infty}^{+\infty}\int_{-\infty+\textrm{i}\pi}^{+\infty+\textrm{i}\pi}
                     \Big\{
                        F_{m,\, \nu}\big[(F_{n,\, \mu}\big)_{zz}-(F_{n,\, \mu})_{ss}\big]
                        -
                        F_{n,\, \mu}\big[(F_{m,\, \nu}\big)_{zz}-(F_{m,\, \nu})_{ss}\big]\Big\}\, dz\, ds\\
                        &+
                        \big[(K_1)_{n,\,\mu}-(K_1)_{m,\,\nu}\big]\int_{-\infty}^{+\infty}\int_{-\infty+\textrm{i}\pi}^{+\infty+\textrm{i}\pi}
                        F_{n,\, \mu}(z,\, s)\,F_{m,\, \nu}(z,\, s) \, (e^{z}-e^s)\, dz\, ds=0.
                \end{split}
            \end{equation*}

We note that
 $(F_{m,\, \nu})_z=  \mathrm{BH}^\prime_{m,\, \nu}(z)\cdot \mathrm{BH}_{n,\, \nu}(s) $ and    $(F_{n,\, \nu})_s=  \mathrm{BH}_{n,\, \nu}(z) \cdot \mathrm{BH}^\prime_{n,\, \nu}(s) $. It again follows from  \eqref{E:BHE-periodic-soln-2} $\mathrm{BH}_{n,\, \nu}(z)$ and  \eqref{E:BHE-periodic-soln-3} $\mathrm{BH}^\prime_{n,\, \nu}(z)$ vanish at $\pm\infty$, so that both $(F_{m,\, \nu})$ and $(F_{m,\, \nu})_z$ also vanish at $\pm\infty$, and the same holds partial derivatives with $z$ replaced by $s$.

Integration of the above equation by parts with respect to both variables $(z,\, s)$ and utilising our observation in the last paragraph that $(F_{m,\,  \nu})$ and $(F_{m,\, \nu})_z$ also vanish simultaneously at $\pm\infty$ yields
		\begin{align*}
                       0&=\int_{-\infty}^{\infty}\int_{-\infty+\textrm{i}\pi}^{\infty+\textrm{i}\pi}\big[F_{m,\, \nu}\,(F_{n,\, \mu}\big)_{zz}-
                        F_{n,\, \mu}\,(F_{m,\, \nu})_{zz}\big]\\
                 &\quad+\big[F_{n,\, \mu}\, (F_{m,\, \nu})_{ss}-F_{m,\, \nu}\,(F_{n,\, \mu}\big)_{ss}\big]\,
                        dz\,ds\\
                &\quad+\big[(K_1)_{n,\,\nu}-(K_1)_{m,\,\mu}\big]\int_{-\infty}^{\infty}\int_{-\infty+\textrm{i}\pi}^{\infty+\textrm{i}\pi}
                        F_{n,\, \mu}(z,\, s)\,F_{m,\, \nu}(z,\, s) \, (e^{z}-e^s)\, dz\, ds\\             &=  \int_{-\infty+\textrm{i}\pi}^{\infty+\textrm{i}\pi}\big[F_{m,\, \nu}\,(F_{n,\, \mu}\big)_{z}-
                        F_{n,\, \mu}\,(F_{m,\, \nu})_{z}\big]\big|_{\infty}^{-\infty}\,ds\\
                &\quad -\int_{-\infty}^{\infty}\int_{-\infty+\textrm{i}\pi}^{\infty+\textrm{i}\pi} (F_{m,\, \nu})_z\,(F_{n,\, \mu})_{z}-
                        (F_{n,\, \mu})_z\,(F_{m,\, \nu})_{z}\, dz\, ds\\
                &\quad +
                  \int_\infty^{-\infty}\big[F_{n,\, \mu}\, (F_{m,\, \nu})_s-F_{m,\, \nu}\,(F_{n,\, \mu})_s\big]\big|_{-\infty+\textrm{i}\pi}^{\infty+\textrm{i}\pi}\, dz
                  \end{align*}
              \begin{align*}
                & \quad -\int_{-\infty}^{\infty}\int_{-\infty+\textrm{i}\pi}^{\infty+\textrm{i}\pi} (F_{m,\, \nu})_s\,(F_{n,\, \mu})_s-
                        (F_{n,\, \mu})_s\,(F_{m,\, \nu})_s\, dz\, ds\\
                &\quad +\big[(K_1)_{n,\,\nu}-(K_1)_{m,\,\mu}\big]\int_{-\infty+\textrm{i}\pi}^{\infty+\textrm{i}\pi}\int_{\infty}^{-\infty}
                        F_{n,\, \mu}(z,\, s)\,F_{m,\, \nu}(z,\, s) \, (e^{z}-e^s)\, dz\, ds
            \end{align*}
            \begin{align}\begin{split}
                &=\int_{-\infty+\textrm{i}\pi}^{\infty+\textrm{i}\pi}\big[F_{m,\, \nu}\,(F_{n,\, \mu}\big)_{z}-
                        F_{n,\, \mu}\,(F_{m,\, \nu})_{z}\big]\big|_{-\infty }^{\infty}\,ds-0\\
                & \quad +
                \int_{-\infty}^{\infty} \big[F_{n,\, \mu}\, (F_{m,\, \nu})_{s}-F_{m,\, \nu}\,(F_{n,\, \mu})_{s}\big]\big|_{-\infty+\textrm{i}\pi}^{\infty+\textrm{i}\pi}\, dz-0\\
                 &\quad+\big[(K_1)_{n,\,\nu}-(K_1)_{m,\,\mu}\big]\int_{-\infty}^{\infty}\int_{-\infty+\textrm{i}\pi}^{\infty+\textrm{i}\pi}
                        F_{n,\, \mu}(z,\, s)\,F_{m,\, \nu}(z,\, s) \, (e^{z}-e^s)\, dz\, ds
            \end{split}
        \end{align}
where the integrands in the second and the fourth double integrals are identically zero. Moreover, both of the two remaining single integrals in the last equal sign are also identically zero since the  $F_{n,\, \mu}$, $F_{m,\, \nu}$ and their partial derivatives vanish simultaneously at $\pm\infty$. That is, we have shown that
	\[
		\big[(K_1)_{n,\,\nu}-(K_1)_{m,\,\mu}\big]\int_{-\infty}^{\infty}\int_{-\infty+\textrm{i}\pi}^{\infty+\textrm{i}\pi}
                        F_{n,\, \mu}(z,\, s)\,F_{m,\, \nu}(z,\, s) \, (e^{z}-e^s)\, dz\, ds=0,
    \]
thus proving the (\ref{E:double-orth}) holds whenever $n\not= m$ and irrespective to the choices of $\nu$ and $\mu$.

It remains to consider the case $m=n$ in (\ref{E:double-orth}). We can easily rewrite
		\[
			\int_{-\infty}^{\infty}\int_{-\infty+\textrm{i}\pi}^{\infty+\textrm{i}\pi}\Big(\mathrm{BH}_{n,\, \nu}(s)\, \mathrm{BH}_{n,\nu}(z)\Big)^2(e^z-e^s)\, dzds
		\]
into the following double integral
	\begin{equation*}
		\begin{split}
			&\int_{-\infty}^\infty\int_{-\infty+\textrm{i}\pi}^{\infty+\textrm{i}\pi}
			\mathrm{BH}^2_{n,\, \nu}(z)\cdot \mathrm{BH}^2_{n,\, \nu}(s) (e^z-e^s)\, dz\, ds\\
			&=\int_{-\infty}^\infty\int_{-\infty}^{\infty}
			[\mathrm{BH}^2_{n,\, \nu}(z)\cdot \mathrm{BH}^2_{n,\, \nu}(\xi+\mathrm{i}\pi)\, (e^z+e^\xi)]\, dz\, d\xi.		
			\end{split}
	\end{equation*}
We note that it follows from \eqref{E:BHE-periodic-soln-2} that the $\mathrm{BH}^2_{n,\, \nu}(\xi+\mathrm{i}\pi)$ converges to $0$ rapidly when $\xi\to\pm\infty$, and that the weight $(e^z+e^\xi)$ is positive throughout the integration region. Hence we deduce that the double integral above, in much the same way as we have done for the validity of \eqref{E:single-norm}, converges and non-vanishing. This proves the \eqref{E:double-norm}.
\end{proof}

\section{Fredholm integral equations}
\label{S:integral}
Whittaker commented in \cite[p. 15]{Whitt1914} that unlike the hypergeometric equation where its solutions have integral representations, certain solutions of Heun equations satisfy \textit{homogeneous Fredholm-type integral equations of second kind} of the form
    \begin{equation}
        \label{E:real-integral}
            u(x)=\lambda\int_a^b K(x,\, t)\, u(t)\, dt,
    \end{equation}
 instead (see, e.g., \cite[Part A, \S6]{Ronv1995}), where $\lambda$ is the eigenvalue of the solution $u(x)$, and {the kernel $K(x,\, t)$ is symmetric in $x$ and $t$}, if any. Such integral equations are of fundamental importance for Heun equations (see, for example, \cite[Part A]{Ronv1995}), {which play the role of integral representations for hypergeometric equation.}

Whittaker \cite{Whitt1915} appears to be the first one who showed that the Lam\'e polynomials, being eigen-solutions to the Lam\'e equation (\ref{E:lame-1}), satisfy the following \textit{Fredholm integral equation of the second kind}:
        \begin{equation}
            \label{E:lame-integral}
                y(z)=\lambda\,\int_{-2K}^{2K} P_n(k\, \textrm{sn\,}z\,
                \textrm{sn\,} t)\, y(t)\, dt,
        \end{equation}
where the $P_n(t)$ is the $n-$th Legendre polynomial and $\lambda$ the corresponding eigen-value. We refer the reader about further improvements to Whittaker's result to consult \cite{Ronv1995}.

\medskip

It has long been known that \textit{Laplace's} and analogous transforms method can solve {some} linear differential equations by definite integrals. The kernels of these integral equations are constructed by solving a specifically designed linear \textit{partial differential equations}, which are based on the adjoint form of the original differential equation, by the method of \textit{separation of variables}.
See for example,
Ince \cite[XVIII]{Ince1926}. Such \textit{kernel functions} approach\footnote{See Garabedian \cite[Chapter 10]{Garabedian1964} for a modern treatment.}
, in the words of Ince \cite{Ince1922}, differs from Green's function consideration where the kernel of the integrals involved usually has discontinuities along the ``diagonal", while the Laplace-type integral transform method produces ``continuous" kernels. Our principal concern here is to obtain such an explicit Fredholm-type integral equation and solutions for the PBHE (\ref{E:BHE-periodic}), using the Laplace method.

 \begin{theorem}\label{T:periodic_integral_Eqn} Let $K_3,\, K_2$ and $K_0$ be real such that
       \begin{equation}
                \label{E:distinct-2}
                   {K_0}=-m^2,\qquad m\in \mathbf{N}.
        \end{equation}
For each non-negative integer $n$, there are $n+1$ distinct pairs of \textit{generalised eigenvalues} $\big((K_1)_{n,\, \nu},\,\lambda_{n,\, \nu}\big),\ 0\le\nu\le n$ such that the Fredholm integral equation of the second kind
    \begin{align}
        \label{E:periodic-integral-eqn}
             y(z)&=\lambda \int_{0}^{4\pi i}
e^{\frac{\epsilon_\infty}{2}\big(K_3(e^z+e^t)-(e^{2z}+e^{2t})\big)+\epsilon_0\sqrt{-K_0}\,(z+t)}
\\
	&\qquad\qquad\times \Phi\Big(\frac{\epsilon_\infty K_3^2}{16}+\frac{1+\epsilon_0\sqrt{-K_0}}{2}+\frac{\epsilon_\infty K_2}{4};\,\,\frac{1}{2};\,
            -\epsilon_\infty(e^z+e^s-\frac{K_3}{2})^2 \Big)e^{t}\, y(t)\, dt,
    \end{align}
where
$\Phi(a,c;x)$ is the Kummer function, admits corresponding eigen-solutions
defined by \emph{(\ref{E:BHE-periodic-soln})} and  $(K_1)_{n,\, \nu}$ satisfies the determinant
        \[
            \det\big( (K_1)_{n,\, \nu}\big)=0
        \]
given in {(\ref{E:Det})}.
\end{theorem}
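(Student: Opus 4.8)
The plan is to adapt Ince's Laplace (kernel-function) method \cite{Ince1922}, \cite{Ince1926}, the same device that produces the integral equation for the Whittaker--Hill equation \eqref{E:WH-eqn}. First I would recast \eqref{E:BHE-periodic} as the eigenvalue problem $\mathcal{M}_z f = -K_1 e^z f$, where $\mathcal{M}_z = \tfrac{d^2}{dz^2} + q(z)$ with $q(z) = -e^{4z} + K_3 e^{3z} + K_2 e^{2z} + K_0$, so that $\mathcal{H} := -e^{-z}\mathcal{M}$ is formally self-adjoint on $L^2(\mathbf{R}, e^z\, dz)$ --- this is exactly the weight $e^z$ responsible for the orthogonality in Theorem \ref{T:simple-orth}. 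The object to produce is a symmetric kernel $K(z,t)$ satisfying the transfer identity
\[
  e^{-z}\,\mathcal{M}_z K(z,t) = e^{-t}\,\mathcal{M}_t K(z,t),
\]
equivalently $\mathcal{M}_z K = e^{z-t}\mathcal{M}_t K$. For then, writing $Y(z) = \lambda\int_C K(z,t)\, e^t y(t)\, dt$, one has $\mathcal{M}_z Y = \lambda e^z\int_C (\mathcal{M}_t K)\, y\, dt$, and two integrations by parts in $t$ move $\mathcal{M}_t$ onto $y$; using $\mathcal{M}_t y = -K_1 e^t y$ this returns $\mathcal{M}_z Y = -K_1 e^z Y$ modulo a boundary bracket $\big[K_t(z,t)\,y(t) - K(z,t)\,y'(t)\big]_{t=0}^{t=4\pi i}$. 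Thus, once the bracket is shown to vanish, any such $Y$ is again an eigensolution with the \emph{same} $K_1$.

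Second, I would pin down $K$ by a Laplace ansatz modelled on the shape of the eigensolutions \eqref{E:BHE-periodic-soln}: take $K(z,t) = E(z)\,E(t)\,v(u)$, where $E(\zeta) = \exp\!\big[\tfrac{\epsilon_\infty}{2}(K_3 e^\zeta - e^{2\zeta}) + \epsilon_0\sqrt{-K_0}\,\zeta\big]$ is the exponential prefactor appearing in \eqref{E:periodic-integral-eqn} and $u = e^z + e^t - K_3/2$ is the symmetric variable entering the Kummer argument. Substituting this product into the transfer identity should make all separate $z$- and $t$-dependence collapse onto the single variable $u$, reducing the PDE to a second-order ODE for $v$. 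I expect that ODE to be Kummer's confluent hypergeometric equation $w\,v'' + (\tfrac12 - w)v' - a v = 0$ in the variable $w = -\epsilon_\infty u^2$, whose entire solution is the stated $\Phi(a;\tfrac12;-\epsilon_\infty u^2)$; the lower parameter $c = \tfrac12$ and the appearance of $u^2$ reflect the parabolic-cylinder/Hermite character already visible in the degenerate case \eqref{E:case1-1}. The separation constant becomes $a = \tfrac{\epsilon_\infty K_3^2}{16} + \tfrac{1+\epsilon_0\sqrt{-K_0}}{2} + \tfrac{\epsilon_\infty K_2}{4}$, and a short check shows that under the quantisation \eqref{E:condition_I} this collapses to $a = -n/2$, tying the kernel to the index $n$ (and terminating to an even Hermite polynomial when $n$ is even).

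Third, I would discharge the boundary bracket over the contour $C = [0,4\pi i]$. Here the hypothesis \eqref{E:distinct-2}, namely $K_0 = -m^2$ with $m\in\mathbf{N}$, is decisive: it forces $\epsilon_0\sqrt{-K_0} = \pm m \in \mathbf{Z}$, so the otherwise multivalued factor $e^{(n+\epsilon_0\sqrt{-K_0})z}$ in \eqref{E:BHE-periodic-soln}, and likewise the kernel, is single-valued; since the coefficients of \eqref{E:BHE-periodic} have imaginary period $2\pi i$, the contour $[0,4\pi i]$ (two periods) closes up and the endpoint contributions at $t=0$ and $t=4\pi i$ coincide and cancel, exactly as for the Lam\'e and Mathieu integral equations. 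With the bracket gone, $Y$ solves the same second-order equation with the same eigenvalue $K_1 = (K_1)_{n,\nu}$; since for this data the single-valued, admissibly growing eigensolution of \eqref{E:BHE-periodic} is, up to scale, the unique $\mathrm{BH}_{n,\nu}$, we obtain $Y = \lambda^{-1}\mathrm{BH}_{n,\nu}$, which is precisely \eqref{E:periodic-integral-eqn}. The admissible $(K_1)_{n,\nu}$ are the $n+1$ roots of the determinant \eqref{E:Det} supplied by Theorem \ref{T:determinant_soln}, and each fixes its reciprocal integral eigenvalue $\lambda_{n,\nu}$, giving the asserted $n+1$ distinct pairs.

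The main obstacle will be the second step: carrying out the Laplace reduction so that the transfer PDE separates cleanly in $u = e^z + e^t - K_3/2$ and lands \emph{exactly} on Kummer's equation with the advertised parameters, rather than on a less tractable ODE. The way I would secure this rigorously is to take the closed-form kernel as an ansatz and verify the transfer identity by direct substitution, using the Kummer equation to cancel terms; the bookkeeping of the $e^z, e^{2z}$ contributions against the cross-terms generated by differentiating $v(u)$ twice is delicate but mechanical. A secondary obstacle is showing $Y\not\equiv 0$ and that it lies in the correct eigenclass (single-valued, of the right growth), so that $Y = \lambda^{-1} y$ with $\lambda \neq 0$; this rests on the uniqueness of the non-oscillatory eigensolution established through Theorem \ref{T:determinant_soln}.
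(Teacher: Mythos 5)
Your proposal is correct and follows essentially the same route as the paper's proof: the same separated kernel ansatz $K(z,t)=\phi(z)\,\phi(t)\,F(e^z+e^t)$ reducing the transfer identity $e^{-z}\mathcal{M}_zK=e^{-t}\mathcal{M}_tK$ to Kummer's equation with exactly the stated parameters, the same double integration by parts over $[0,4\pi i]$ with the boundary bracket killed by the single-valuedness forced by $K_0=-m^2$, and the same identification of the resulting solution with $\mathrm{BH}_{n,\nu}$ via uniqueness of the periodic eigensolution at a given eigenvalue. The only immaterial difference is that you apply the Lagrange identity directly to $\mathcal{M}_t=d^2/dt^2+q(t)$, whereas the paper first passes to the self-adjoint gauge $R_z=e^{z/2}L_z e^{-z/2}$ before integrating by parts; your observation that the quantisation \eqref{E:condition_I} collapses the Kummer parameter to $a=-n/2$ is a pleasant extra the paper does not make explicit.
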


\begin{proof} Without loss of generality, we may consider for each integer $n\in\mathbf{N}$, and $\nu=0,\, 1,\, \cdots, n$, the coefficients $K_3,\, K_2,\, K_0$ satisfy the relation (\ref{E:condition_I}). Then there are $n+1$ of $K_1=(K_1)_{n,\, \nu}$ that are roots to the determinant $D_{n+1}(K_1)=0$ from (\ref{E:Det}),
the equation
\begin{equation}
    \label{E:BHE-periodic_rotated}
        e^{-z}f^{\prime\prime}(z)+\left(K_{4}\,e^{3z}+K_{3}\, e^{2z}+K_{2}\,e^{z}+K_{1} +K_0\, e^{-z}\right)f(z)=0,
    \end{equation}
with $K_4=-1$.
\bigskip

Suppose (\ref{E:BHE-periodic_rotated}) admits an ``eigen-solution" $u(z)$. Then we define a sequence of second order partial differential operators
    \begin{equation}
		\label{E:schrodinger}
        L_z:=e^{-z}\Big(\frac{\partial^2}{\partial z^2}+\ell(z)\Big),
    \end{equation}
where {$L_z=(L_{n,\, \nu})_z$ for each integer $n\ge 0$ and}
    \begin{equation}
        \ell(z):={(\ell_{n,\, \nu})(z)=}K_{4}\,e^{4z}+K_{3}\, e^{4z}+K_{2}\,e^{2z}+K_{1}e^z+K_0.
    \end{equation}
Let $K(z,\, s)$ be a function with two complex variables. Then we construct a partial differential equation for which $K(z,\, s)$ solves:
    \begin{equation}
        \label{E:PDE}
            L_z(K)-L_s(K)=e^{-z}\frac{\partial^2K}{\partial z^2}-e^{-s}\frac{\partial^2 K}{\partial s^2}+[e^{-z}\ell(z)-e^{-s}\ell(s)]\, K.
    \end{equation}
Now we put
    \begin{equation}
        \label{E:kernel}
        K(z,\, s)=\phi(z)\,\phi(s)\, F(\zeta),\quad \zeta=e^z+e^s,
    \end{equation}
with $\phi(z)=\exp\big[a\, e^{2z}+b\, e^z +dz\big]$. Here {$a,\, b,\,d$} are constants {remain to be chosen}.
\medskip

Substituting (\ref{E:kernel}) into (\ref{E:PDE}) yields
    \[
        L_z(K)-L_s(K)=e^{-z}K_{zz}-e^{-s}K_{ss}+\big[e^{-z}\ell(z)-e^{-s}\ell(s)\big]\cdot K
    \]
that is,
\begin{align}
  L_z(K)-L_s(K) &= e^{-z}\frac{\partial^2 K(z,s)}{\partial z^2}-e^{-s}\frac{\partial^2 K(z,s)}{\partial s^2}\\
	&= \Big[\Big(\frac{\phi''(z)}{\phi(z)}e^{-z}-\frac{\phi''(s)}{\phi(s)}e^{-s} \Big)+2\frac{F'}{F}\Big(\frac{\phi'(z)}{\phi(z)}-\frac{\phi'(s)}{\phi(s)} \Big)\notag\\
&\quad+\frac{F''}{F}(e^z-e^s)+\sum_{j=0}^4K_j(e^{(j-1)z}-e^{(j-1)s})\Big]K(z,\,s),
\end{align}
which vanishes identically if we set
    \begin{equation}
        \label{E:two-set-parameters-I}
        {4a^2+K_4=0},\quad d^2+K_0=0,
    \end{equation}
    and
    \begin{align}\label{E:Kummer}
    F''(\zeta)+(4a\zeta+2b)F'(\zeta)+\Big((K_3+4ab)\zeta+K_2+4a(1+d)+b^2\Big)F(\zeta)=0.
    \end{align}
Note that $K_4=-1$. We take $a={\epsilon_\infty}/{2},\, b=-{\epsilon_\infty K_3}/{2},\, d=\epsilon_0\sqrt{-K_0}$, ($\epsilon_0^2=\epsilon_\infty^2=1$). Let $z=-\epsilon_\infty(\zeta-K_3/2)^2$, $F(\zeta)=u(z)$. Then we can transform the differential equation \eqref{E:Kummer} to
		\begin{equation}
			\label{E:1F1}
			zu^{\prime\prime}+(1/2-z)u^\prime(z)-\Big(\epsilon_\infty K_3^2/16+\epsilon_0\sqrt{-K_0}/2+\epsilon_\infty K_2/4+1/2\Big)u=0,
		\end{equation}which is the standard form of the confluent hypergeometric equation \cite[Chap. 6]{Bateman_I}. It is well-known that the Kummer function $\Phi(a;\, c; z)$, where $a=\epsilon_\infty K_3^2/16+\epsilon_0\sqrt{-K_0}/2+\epsilon_\infty K_2/4+1/2$ and $c=1/2$, is an entire solution to the confluent hypergeometric equation \cite[\S6.1 (2)]{Bateman_I}.
	
Hence
	 \begin{align}
        \label{E:K}
            F(\zeta)&=\Phi\Big(\frac{\epsilon_\infty K_3^2}{16}+\frac{1+\epsilon_0\sqrt{-K_0}}{2}+\frac{\epsilon_\infty K_2}{4};\,\frac{1}{2};\,
            \frac{-\epsilon_\infty(2\zeta-K_3)^2}{4} \Big)\notag \\
			&=\Phi\Big(\frac{\epsilon_\infty K_3^2}{16}+\frac{1+\epsilon_0\sqrt{-K_0}}{2}+\frac{\epsilon_\infty K_2}{4};\,\,\frac{1}{2};\,
            -\epsilon_\infty(e^z+e^s-\frac{K_3}{2})^2 \Big).
        \end{align}
Note that the $F(\zeta)$ above is a periodic function of period $2\pi i$. Consider the operator
 \begin{align}\label{E:operator}
      R_z\,w & :=\frac{d}{dz}\Big(e^{-z}\frac{d}{dz}w(z)\Big)\notag\\
			&\qquad +\Big(K_{4}\,e^{3z}+K_{3}\, e^{2z}+K_{2}\,e^{z}+K_{1}+(K_0+\frac{1}{4})\,e^{-z}\Big)w(z)=0
      \end{align}
which is self-adjoint $R_z=R_z^*$, and where
	\begin{equation}
		\label{E:gauge}
			R_z:=e^{z/2}L_ze^{-z/2}
	\end{equation}
is a gauge transform of $L_z$ defined in \eqref{E:schrodinger}.  We define
	\begin{equation}\label{E:T}
		T(z):=\int_{\Gamma} e^{(z+t)/2}\, K(z,\,t)\, w(t)\, dt
	\end{equation}
where $w(t)$ is an eigen-solution to \eqref{E:BHE-operator} of the form \eqref{E:BHE-periodic-soln} and $\Gamma$ denote the line segment $[0,\, 4\pi i]$. Applying the operator $R_z$ to $T(z)$ and applying the gauge transform yields,
	\begin{align}
		\label{E:K-T}
		R_z T(z) &=\int_\Gamma R_z[e^{z/2}K(z,\,t)] e^{t/2}w(t)\, dt=\int_{\Gamma} e^{z/2}L_z[K(z,\,t)]\, e^{t/2}w(t)\, dt\notag\\
			&=e^{z/2}\int_{\Gamma} e^{t/2}\, L_t[e^{-t/2}\cdot e^{t/2} K(z,\,t)]\,w(t)\, dt\notag\\
			&=e^{z/2}\int_{\Gamma} R_t[e^{t/2}K(z,\,t)]\,w(t)\, dt,
	\end{align}
where the $K_t$ assumes the same form as \eqref{E:operator} with $z$ replaced by $t$. But then integration-by-parts twice yields
	\begin{align}
		&\int_{\Gamma} R_t[e^{t/2}  K(z,\,t)]\,w(t)\, dt\notag\\
		&=\int_{\Gamma}\Big\{ \frac{d}{dt}\Big[e^{-{t}}\frac{d}{dt}\big( e^{t/2}K(z,\,t)\big)
\Big]w(t)+\Big[\sum_{j=1}^4 K_{j}\,e^{jt}+\big(K_0+\frac{1}{4}\big)\Big]
\big( e^{t/2} K(z,\,t)\big)w(t)\Big\}\, dt  \notag\\
		&=e^{-{t}}\frac{d}{dt}\big( e^{t/2} K(z,\,t)\big)\,w(t)\Big|_\Gamma
		-\int_{\Gamma} \Big[e^{-{t}}\frac{d}{dt}\big( e^{t/2} K(z,\,t)\big)\,w(t)\Big]
		w^\prime(t)\, dt\notag\\
		&\hskip5cm +\int_{\Gamma} \Big[\sum_{j=1}^4 K_{j}\,e^{jt}+\big(K_0+\frac{1}{4}\big)\Big]
\big( e^{t/2} K(z,\,t)\big)w(t)\, dt  \notag\\
		&=0-\big( e^{t/2} K(z,\,t)w(t)\big)e^{-t}w^\prime(t)\Big|_\Gamma
		+\int_{\Gamma} \big( e^{t/2} K(z,\,s)w(t)\big)\big(e^{-t}w^\prime(t)\big)^\prime\, dt\notag \\
		&\hskip5cm +\int_{\Gamma} \Big[\sum_{j=1}^4 K_{j}\,e^{jt}+\big(K_0+\frac{1}{4}\big)\Big]
\big( e^{t/2} K(z,\,t)\big)w(t)\, dt  \notag\\
		&=0+\int_{\Gamma} \big( e^{t/2} K(z,\,t)\,w(t)\big)\cdot R_t[w(t)]\, dt=0,
	\end{align}
since both the $e^{t/2} K(z,\,t)$, $w(t)$ and $w^\prime(t)$ return to the same values after a $4\pi i$ shift and that $R_t[w(t)]\equiv 0$.  Combining this with \eqref{E:T} shows that $T(z)$ is a solution to the equation \eqref{E:K-T}. We have thus proved, apart from a non-zero constant, that the right-hand side of the \eqref{E:periodic-integral-eqn}, which we denote by $\tilde{T}(z)$ is a solution to \eqref{E:BHE-periodic_rotated}. Since $\sqrt{-K_0}\in \mathbf{Z}$ so the $\tilde{T}(z)$  and moreover even its derivative $\tilde{T}^\prime(z)$ are both periodic of period $4\pi i$. It follows from standard Sturm-Liouville theory (see e.g.  \cite[\S2.2, (2.2.1)]{Eastham1973}) asserts that there is sequence of real eigen-values $\lambda_{n,\, \nu}\ (\nu=0,\,1,\, 2,\, \cdots)$ and $\lambda_{n,\, \nu}\to\infty$ as $\nu\to \infty$. It is known that corresponding to each $\lambda_{n\,\nu}$ there can be at most one such eigenfunction that satisfies the boundary condition. Since both the $\mathrm{BH}_{n\,\nu}$ and $\tilde{T}$ satisfy the same equation \eqref{E:BHE-periodic_rotated} and same boundary condition $f(0)=f(4\pi i)$ and $f^\prime(0)=f^\prime(4\pi i)$, so $\mathrm{BH}_{n\,\nu}$ and $\tilde{T}$ can differ by at most a non-zero constant. This completes that proof. We note that for each non-negative integer $n$, only the first $n+1$ solutions of the form \eqref{E:BHE-periodic-soln} correspond to the eigen-values $\lambda_{n,\, \nu}\ (0\le \nu\le n)$ are eigensolutions to the integral equation \eqref{E:periodic-integral-eqn} here.
 \end{proof}

\section{Comments and conclusions}

We would like to point out that it is known that the Lam\'e equation is a periodic version of a limiting case of the \textit{Heun equation} \cite{Ronv1995} and our equation PBHE \eqref{E:BHE-periodic} is a periodic version of the \textit{biconfluent Heun equation} \eqref{E:BHE} \cite{Ronv1995}, which is sometime called \textit{rotating harmonic oscillator}  \cite{Masson1982}. Despite the long history of BHE (see e.g. \cite{schrodinger}, \cite{Dunham1932}), our understanding of the
equation is still far from satisfactory \cite[\S6]{Masson1982}. As far as the authors are aware, the PBHE first appears in Turbiner's study of \textit{quasi-exact solvable} differential operators related to Lie algebra $sl(2)$ consideration \cite[Eqn. VII, Table 1]{Turbiner1988}. This paper appears to be the first serious study of the PBHE from Hill's equation viewpoint using differential Galois theory and Nevanlinna's value distribution.   In particular, we have shown that the Liouvillian solutions of the PBHE are precisely those solutions which have finite exponent of convergence of zeros (non-oscillatory). We then show that these Liouvillian solutions exhibits novel orthogonality relations. 

We have just learnt that our orthogonality results for PBHE can be explained with a new \textit{jointly orthogonal polynomials} theory proposed recently by Felder and Willwacher \cite{FW2015} in the final stage of preparation of this paper. See also \cite{Arscott1972}. Their theory also covers orthogonality for the Lam\'e and Whittaker-Hill equations. However, our PBHE and orthogonality weight is more general than what is contained in \cite{FW2015} (i.e., $K_3=0$ in \eqref{E:BHE-periodic}). On the other hand, it is tempting to think that our orthogonality results for the PBHE are simple "pull-back" of those in \cite{FW2015}. We argue that there are major differences between those differential equations with rational potentials and their periodic counterparts. First, usually, both the orthogonality and integral equations results for the periodic equations are much more elegant than their "rational counterparts". Second, sometimes, certain results only exist for equations with periodic potentials. For example, although the \textit{double-confluent Heun equation} (DHE) \cite{Ronv1995} can only have asymptotic expansions for solutions at the origin $x=0$ which is an irregular singularity, Luo and the first author of this paper have obtained in \cite{Chiang-Luo2016} both the general and (anti-)periodic entire solutions for a periodic counterpart of the DHE, namely the Whittaker-Hill equation \eqref{E:WH-eqn}. An additional advantage is that periodic equation allows us to utilize the far-reaching classical Floquet theory \cite{Magnus1976}, \cite{Eastham1973}. In the case of Fredholm integral equation of the second kind with periodic symmetric kernel, the boundary condition is also much more simpler than their rational kernel counterparts \cite{Maroni1979}.

Finally we recall that Bank \cite{Bank1993} suggested an algorithm involving the construction of what he called \textit{approximate square-roots} to find explicit representations of non-oscillatory solutions to the \eqref{E:BL-general}. We would like to point out that Bank's algorithm is essentially a special case of the case (1) of Kovacic's algorithm that we have applied in this paper. We shall pursue this matter in a subsequent project. 

\section*{\ackname}
	The authors are deeply indebted for the referee's constructive comments to our paper, and for his/her generosity in sharing with us, and sometimes with detail explanation, of a number of useful references in differential Galois theory. The authors  would also like to acknowledge very useful discussions with their colleague Avery Ching that eventually led to this project. Finally, the authors thank the editors for their excellent stylistic suggestions.
	
 \section{Appendix: Kovacic's algorithm}
Let us fix the notation. For
$$ r=\frac{s}{t}, \qquad s,\, t \in \mathbf{C}[x],$$
(1) Denote by $\Gamma'$ be the set of (finite) poles of $r$, i.e., $\Gamma'=\{c\in \mathbf{C}: t(c)=0 \}$.\\
(2) Denote by $\Gamma=\Gamma'\cup\{\infty \}$\\
(3) By the order of $r$ at $c\in \Gamma', o(r_c)$, we mean the multiplicity of $c$ as a pole of $r$.\\
(4) By the order of $r$ at $\infty, o(r_{\infty})$, we mean the order of pole of $r(1/x)$ at $x=0$. That is $o(r_{\infty})=\deg(t)-\deg(s)$.

We list only the case 1 out of the four cases in the original Kovacic algorithm here. We refer to either to Kovacic's original article \cite{Kovacic1986}, or \cite{Acosta_Blazquez_2008} (see also \cite{Morales-Ruiz2015}) for the full algorithm.

The first case of four sub-cases.\\

In this case $[\sqrt{r}]_c$ and $[\sqrt{r}]_{\infty}$ means the Laurent series of $\sqrt{r}$ at $c$
and the Laurent series of $\sqrt{r}$ at $\infty$ respectively. Furthermore, we define $\epsilon(p)$ as follows: if
$p\in \Gamma$, then  $\epsilon(p)\in \{+,- \}$. Finally, the complex numbers $\alpha_c^+,\alpha_c^-,\alpha_{\infty}^+,\alpha_{\infty}^-$ will be defined in the first step. If the differential
equation  has no poles, then it can only fall in this case.

\textbf{Step 1.} Search for each $c\in \Gamma'$ and for $\infty$ the corresponding situation as follows:
\begin{itemize}
\item If $o(r_c)=0$, then
   \begin{align*}
    [\sqrt{r}]_c=0,\quad \alpha_c^{\pm}=0.
   \end{align*}
\item If $o(r_c)=1$, then
   \begin{align*}
    [\sqrt{r}]_c=0,\quad \alpha_c^{\pm}=1.
   \end{align*}
\item If $o(r_c)=2$, and
   \begin{align*}
    & r=\cdots+b(x-c)^{-2}+\cdots,\mbox{then}\\
      & [\sqrt{r}]_c=0,\quad \alpha_c^{\pm}=\frac{1\pm\sqrt{1+4b}}{2}.
   \end{align*}
 \item If $o(r_c)=2v\geq 4$, and
   \begin{align*}
    & r=(a(x-c)^{-v}+\cdots+d(x-c)^{-2})^2+b(x-c)^{-(v+1)}+\cdots,\mbox{then}\\
      & [\sqrt{r}]_c=a(x-c)^{-v}+\cdots+d(x-c)^{-2},\quad \alpha_c^{\pm}=\frac{1}{2}\Big(\pm\frac{b}{a}+v\Big).
   \end{align*}
\item If $o(r_{\infty})>2$, then
   \begin{align*}
    [\sqrt{r}]_{\infty}=0,\quad \alpha_{\infty}^{+}=0,\quad \alpha_{\infty}^{-}=1.
   \end{align*}
\item If $o(r_{\infty})=2$ and $r=\cdots+bx^{-2}+\cdots$, then
   \begin{align*}
    [\sqrt{r}]_{\infty}=0,\quad \alpha_{\infty}^{\pm}=\frac{1\pm\sqrt{1+4b}}{2}.
   \end{align*}
\item If $o(r_{\infty})=-2v\leq 0$, and
   \begin{align*}
   & r=(ax^v+\cdots+d)^2+bx^{v-1}+\cdots,\mbox{then}\\
   & [\sqrt{r}]_{\infty}=ax^v+\cdots+d ,\quad \alpha_{\infty}^{\pm}=\frac{1}{2}\Big(\pm\frac{b}{a}-v\Big).
   \end{align*}
\end{itemize}
\textbf{Step 2}. Find $D\neq \emptyset$ defined by
\begin{align*}
D=\{ m\in \mathbf{Z}_{+}: m=\alpha_{\infty}^{\epsilon_{\infty}}-\sum_{c\in \Gamma'}\alpha_c^{\epsilon_c},\, \forall(\epsilon_p)_{p\in \Gamma} \}.
\end{align*}
If $D=\emptyset$, then we should start with the case 2. Now if $\# D>0$, then for each $m\in D$ we search $\omega\in \mathbf{C}(x)$ such that
$$ \omega=\epsilon(\infty)[\sqrt{r}]_{\infty}+\sum_{c\in \Gamma'}(\epsilon(c)[\sqrt{r}]_c+\alpha_c^{\epsilon(c)}(x-c)^{-1}).$$
\textbf{Step 3}. For each $m\in D$, search for a monic polynomial $P_m$ of degree $m$ with
\[  P_m''+2\omega P_m'+(\omega'+\omega^2-r)P_m=0.\]
If one is successful, then $y_1=P_me^{\int \omega}$ is a solution of the differential equation. Else, Case 1 cannot hold.


\end{document}